\documentclass{article}[dvipdfmx]

\usepackage[utf8]{inputenc}
\usepackage{url}
\usepackage{amsmath}
\usepackage{amsthm}
\usepackage{amssymb}
\usepackage{here}
\usepackage{caption}

\usepackage{cleveref}

\theoremstyle{definition}
\newtheorem{defi}{Definition}[section]

\theoremstyle{plain}
\newtheorem{prop}{Proposition}[section]
\newtheorem{theo}[prop]{Theorem}
\newtheorem{cor}[prop]{Corollary}
\newtheorem{lem}[prop]{Lemma}
\newtheorem{fac}[prop]{Fact}

\newtheorem*{cla}{Claim}
\newtheorem*{subcla}{Subclaim}

\newtheorem{maintheo}{Main Theorem}
\newtheorem{que}{Question}

\theoremstyle{remark}

\newcommand{\ZFC}{\mathrm{ZFC}}
\newcommand{\GCH}{\mathrm{GCH}}
\newcommand{\cf}[1]{\mathrm{cf}\left({#1}\right)}

\newcommand{\dmu}{\mathfrak{d}_{\mu}}

\newcommand{\domega}{\mathfrak{d}}

\newcommand{\smu}{\mathfrak{s}_{\mu}}
\newcommand{\salephomega}{\mathfrak{s}_{\aleph_\omega}}

\newcommand{\otp}[1]{\mathrm{otp} \left( {#1}  \right)}

\newcommand{\lkakko}{\left\lbrace}
\newcommand{\rkakko}{\right\rbrace}
\newcommand{\F}{\mathcal{F}}

\newcommand{\add}{\mathrm{add}}

\newcommand{\relmiddle}[1]{\mathrel{}\middle#1\mathrel{}}

\usepackage{xcolor}

\usepackage{tikz}

\usetikzlibrary{arrows}

\newcommand{\R}{\mathbb{R}}

\newcommand{\I}{\mathcal{I}}
\newcommand{\J}{\mathcal{J}}
\newcommand{\K}{\mathcal{K}}
\newcommand{\X}{\mathcal{X}}
\newcommand{\Y}{\mathcal{Y}}

\crefname{prop}{Proposition}{Propositions}
\crefname{theo}{Theorem}{Theorems}
\crefname{cor}{Corollary}{Corollaries}
\crefname{lem}{Lemma}{Lemmata}
\crefname{fac}{Fact}{Facts}
\crefname{prob}{Problem}{Problems}

\newcommand{\pposet}{\mathbb{P}}
\newcommand{\qposet}{\mathbb{Q}}
\newcommand{\rposet}{\mathbb{R}}

\usepackage[affil-it]{authblk}

\title{Dominating numbers at singular cardinals}
\author{Yusuke Hayashi\thanks{Supported by JST SPRING, Japan Grant Number JPMJSP2148}}
\affil{Graduate School of System Informatics, Kobe University, 1-1 Rokkodai, Nada-ku, 657-8501 Kobe, Japan. E-mail: 219x504x@stu.kobe-u.ac.jp}
\date{\today}

\begin{document}

\maketitle

\begin{abstract}
We study the generalized dominating number $\dmu$ at a singular cardinal $\mu$ of cofinality $\kappa$. We prove two basic lower bounds: in $\ZFC$, $\cf{[\mu]^\kappa,\subseteq} \leq \dmu$, and under mild cardinal-arithmetic assumptions, $2^{<\mu} \leq \dmu$. We also clarify when $\dmu$ can differ from $2^\mu$: assuming $\GCH$ and $\kappa = \cf{\mu} > \omega$, a finite-support iteration of Cohen forcing of length $\mu^{++}$ yields $\dmu < 2^\mu$. On the other hand, for $\kappa = \cf{\mu} = \omega$, natural $\mu$-cc posets force $\dmu = 2^\mu$. 
\end{abstract}

\section{Introduction}
Cardinal invariants are cardinals that describe combinatorial properties of mathematical objects related to the real numbers. So far, many cardinal invariants have been studied extensively by many set theorists. For example, let $\add(\mathcal N)$ be the least cardinal $\kappa$ such that the union of $\kappa$ many Lebesgue null sets is no longer null. By the $\sigma$-additivity of the Lebesgue measure, $\add(\mathcal N)$ cannot be $\aleph_0$. In addition, since $\mathbb R$ is the union of all singletons, we have $\add(\mathcal N)\le 2^{\aleph_0}$. Thus cardinal invariants typically take values between $\aleph_1$ and $2^{\aleph_0}$. There are many results that describe which inequalities among these invariants are provable and which are independent of $\ZFC$ (see \cite{blass2009combinatorial,bartoszynski1995set}). One aim of studying cardinal invariants is to understand relationships among mathematical objects via inequalities between these invariants.

Many classical cardinal invariants are defined using the structure of $P(\omega)$ or $\omega^{\omega}$. For any uncountable cardinal $\mu$, we can naturally lift these definitions by considering analogous properties of $P(\mu)$ or $\mu^{\mu}$. Research on these higher cardinal invariants has developed in recent years. For example, the case of regular $\mu$ has been studied in \cite{cummings1995cardinal,brendle2018cichon}, and the singular cardinal case has also been explored in \cite{garti2012ultrafilter,garti2020cardinal}. In particular, the dominating number at singular cardinals has been studied by Garti and Shelah \cite{garti2018dear,shelah2020dmu}. The following is a theorem of  Shelah \cite{shelah2020dmu}. \textbf{In this section, suppose that $\mu$ is a singular cardinal with cofinality $\kappa$.} We write $\dmu$ for the generalized dominating number at singular $\mu$ (for the precise definition, see \Cref{defi:generalized dominating}).

\begin{theo}[Shelah \cite{shelah2020dmu}]\label{theo:shelahs estimation of dmu}
    If $\alpha^\kappa < \mu$ for all $\alpha < \mu$, then $\dmu = 2^\mu$.
\end{theo}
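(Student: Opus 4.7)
The plan is to establish $\dmu \leq 2^\mu$ and $\dmu \geq 2^\mu$ separately. The upper bound is immediate from the cardinality of the ambient function space (a subset of $\mu^\mu$, of cardinality at most $2^\mu$), since the whole space is trivially a dominating family.

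For the lower bound, my first-choice approach is a counting argument: given any family $\mathcal{F}$ with $|\mathcal{F}| < 2^\mu$, I would construct a function $g$ not $\leq^*$-dominated by any $f \in \mathcal{F}$. The hypothesis $\alpha^\kappa < \mu$ for all $\alpha < \mu$ enters by bounding how much ``information'' each $f$ can absorb. Concretely, I would decompose each downward cone $D_f = \{g : g \leq^* f\}$ according to the bounded ``exceptional set'' $\{i : g(i) > f(i)\}$; the piece indexed by an $\alpha$-bounded exceptional set sees a free block of size at most $\mu_\alpha^\kappa < \mu$ by the hypothesis, which --- after being combined with the portion dominated by $f$ --- aims to yield a total bound $|D_f| \leq \mu$. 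Then $|\bigcup_f D_f| \leq |\mathcal{F}| \cdot \mu < 2^\mu$, leaving room for an undominated $g$ in the ambient space of size $2^\mu$.

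A cleaner alternative route uses the ZFC bound $\cf{[\mu]^\kappa, \subseteq} \leq \dmu$ recorded in the abstract: it would suffice to show $\cf{[\mu]^\kappa, \subseteq} \geq 2^\mu$ under the hypothesis. Under $\alpha^\kappa < \mu$, every bounded $X \in [\mu]^\kappa$ is contained in some $[\mu_i]^\kappa$ of cardinality $\mu_i^\kappa < \mu$, so the bounded part of $[\mu]^\kappa$ is covered by $\mu$-many ``test'' sets, and the cofinality is then controlled by the unbounded members via a pcf-style count that, combined with a cardinal-arithmetic identification of this count with $2^\mu$, would finish the argument.

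The main obstacle I foresee is that the hypothesis $\alpha^\kappa < \mu$ does \emph{not} directly control $2^\alpha$ or $\alpha^\mu$ for $\alpha < \mu$, so the leap from ``bounded cones of size $< \mu$'' to ``total space of size $2^\mu$'' requires a genuine cardinal-arithmetic input. In particular, controlling the ``tail'' of the cone $D_f$ (indices $i$ where $g(i) \leq f(i)$, of which there can be unboundedly many with $f(i)$ itself nearly cofinal in $\mu_i$) is where the argument is most delicate; I expect this to be the step that forces one to invoke a Shelah-style pcf lemma, for instance identifying $\pp{\mu}$ with the relevant dominating or covering invariant under the hypothesis, in order to bridge the gap up to $2^\mu$.
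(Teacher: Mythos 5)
First, note that the paper does not prove this statement at all: it is quoted from Shelah's work and used as a black box, so there is no in-paper argument to compare against. Judged on its own terms, your proposal has a fatal gap in its main route. The counting argument requires each downward cone $D_f=\{g : f \text{ dominates } g\}$ to have size at most $\mu$, but this is false in the strongest possible way: a \emph{single} function can dominate $2^\mu$ many functions. Here domination means $g(\alpha)<f(\alpha)$ off a set of size $<\mu$, so the constant function $f\equiv 2$ in $\kappa^\mu$ dominates every $\{0,1\}$-valued $g$, and there are $2^\mu$ of those. Hence $|D_f|=2^\mu$ and the bound $|\bigcup_f D_f|\le|\mathcal F|\cdot\mu$ collapses. (A secondary issue: you treat the exceptional set $\{\alpha : g(\alpha)\ge f(\alpha)\}$ as bounded, but a subset of singular $\mu$ of size $<\mu$ can be cofinal, e.g.\ of order type $\kappa$.) The whole difficulty of the theorem is precisely that cones are huge, so no cardinality count of cones can work.

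Your alternative route is also provably insufficient. Under the hypothesis one has $2^\kappa<\mu$, hence $\cf{[\mu]^\kappa,\subseteq}=\mu^\kappa$, and it is consistent with the hypothesis that $\mu^\kappa<2^\mu$: over a model of $\GCH$ with $\mu=\aleph_{\omega_1}$, $\kappa=\aleph_1$, add many Cohen subsets of $\aleph_2$ with a ${<}\aleph_2$-closed forcing; this adds no new $\omega_1$-sequences, so $\alpha^{\aleph_1}<\mu$ for all $\alpha<\mu$ and $\mu^{\aleph_1}=\aleph_{\omega_1+1}$ persist, while $2^{\aleph_2}$ (hence $2^\mu$) can be made arbitrarily large. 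So $\cf{[\mu]^\kappa,\subseteq}\ge 2^\mu$ is simply not available. The idea you are missing is the one this paper builds its own results around: almost distinct families of functions. One encodes each $X\subseteq\alpha$ as a function $f_X$ (e.g.\ $f_X(x)=X\cap x$ on a cofinal $C\subseteq[\alpha]^\kappa$) so that distinct $f_X$'s agree only on a set lying in a $\kappa^+$-complete ideal; then, as in \Cref{theo:computing dmu from adf}, a single dominating function can cover at most $\kappa$ members of such a family, because on a point outside the union of the agreement sets the values $f_X(x)$ are pairwise distinct and hence unbounded in the range. This is what defeats the ``huge cones'' obstruction and is the mechanism by which one pushes a lower bound of the form $2^\alpha$ (and ultimately $2^\mu$ under Shelah's hypothesis) below $\dmu$; your instinct that a genuine pcf-style input is needed at the end was correct, but without the a.d.f.\ step neither of your two routes gets off the ground.
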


It is natural to ask whether the cardinal arithmetical assumption is necessary in Theorem 1.1. Shelah \cite[Claim~3.1]{shelah2020dmu} asserts that the assumption is necessary by showing that if we add $\mu^{++}$-many $\kappa^+$-Cohen sets over a model of GCH, then we obtain a model in which $\dmu < 2^\mu$. But this contradicts \Cref{theo:shelahs estimation of dmu} since the assumption of \Cref{theo:shelahs estimation of dmu} holds in the forcing extension. This question is still open as far as the author knows.

Towards its solution, this paper investigates $\dmu$ with more details. In particular, we give two lower bounds of $\dmu$ under some mild assumptions, and discuss the consistency of $\dmu < 2^\mu$.

By \Cref{theo:shelahs estimation of dmu}, if $\alpha^\kappa < \mu$ holds for every $\alpha < \mu$, then $\dmu = 2^\mu$ follows immediately. In $\ZFC$, the equality $\alpha^\kappa = 2^\kappa \cdot \cf{[\alpha]^\kappa, \subseteq}$ holds. Main Theorem 1 shows that, under appropriate assumptions on $2^\kappa$ and $\cf{[\alpha]^\kappa, \subseteq}$, even when $\alpha^\kappa \geq \mu$ for some $\alpha < \mu$, $2^{<\mu}$ can still be a lower bound for $\dmu$.

\begin{maintheo}[see Theorem \ref{theo:main dmu >= 2^mu}]
    Suppose that 
    \begin{itemize}
        \item $2^\kappa < \mu^{+\kappa^+}$,
        \item $\cf{[\alpha]^\kappa, \subseteq} <\mu$ for all $\alpha < \mu$, and
        \item $2^\kappa < 2^\nu$ for some $\nu < \mu$.
    \end{itemize}
    Then $\dmu \geq 2^{<\mu}$.
\end{maintheo}

We note that the second assumption above is very weak. Indeed, it is known that its negation implies the existence of $0^{\sharp}$. A key in the proof of Main Theorem 1 is the use of almost distinct families of functions (a.d.f.). This is based on the methods used to analyze the dominating number of $\omega^{\omega_1}$ by Jech and Prikry in \cite{jech1984cofinality}.

Main Theorem 2 gives a lower bound for $\dmu$ without any additional assumptions beyond $\ZFC$.

\begin{maintheo}[see Theorem \ref{theo:cfkappa^lambdasub<=dmu}]
    $\cf{[\mu]^\kappa, \subseteq} \leq \dmu$.
\end{maintheo}

$\cf{[\mu]^\kappa, \subseteq}$ is one of the most important cardinal invariants in the combinatorics of $\mu$. For example, $\mu^\kappa = \cf{[\mu]^\kappa, \subseteq} \cdot 2^\kappa$ holds in $\ZFC$.

Also, Main Theorem 2 can be applied to the investigation of the analogue of $\mathfrak{s}\leq\domega$. It is well known that $\mathfrak{s}\leq\domega$ is provable in $\ZFC$, and interval partitions play a central role in the proof. However, the behavior of interval partitions on singular cardinals is different from that on regular cardinals. Hence, extending the argument of the proof of $\mathfrak{s} \leq \domega$ is difficult. On the other hand, for the higher analogue of the splitting number, Zapletal \cite{zapletal1997splitting} has proved the following.

\begin{fac}[Zapletal \cite{zapletal1997splitting}]
    $\smu \leq \cf{[\mu]^\kappa, \subseteq}$.
\end{fac}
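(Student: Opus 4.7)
The plan is to convert a cofinal family $\mathcal{C} \subseteq ([\mu]^\kappa, \subseteq)$ of cardinality $\theta := \cf{[\mu]^\kappa, \subseteq}$ into a splitting family of the same cardinality, using the half-open interval partitions of $\mu$ induced by the increasing enumerations of the members of $\mathcal{C}$.

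First I would prepare $\mathcal{C}$: replacing each $C \in \mathcal{C}$ by $C \cup E_0$ for a fixed cofinal $E_0 \in [\mu]^\kappa$, we may assume every $C \in \mathcal{C}$ is cofinal in $\mu$ while preserving the cofinal property and the cardinality $\theta$. Write $C = \{c_\xi^C : \xi < \kappa\}$ in increasing order. Fix once and for all a partition $\kappa = A \sqcup B$ with both $A$ and $B$ cofinal in $\kappa$ (e.g.\ successor ordinals versus limits when $\kappa$ is uncountable, or even versus odd integers when $\kappa = \omega$), and for each $C$ set
\[
S_C := \bigcup_{\xi \in A} \bigl[c_\xi^C, c_{\xi+1}^C\bigr).
\]
The candidate splitting family is $\{S_C : C \in \mathcal{C}\}$, of size at most $\theta$.

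To verify splitness, fix $X \in [\mu]^\mu$; since $|X| = \mu$ and every proper initial segment of $\mu$ has size $<\mu$, $X$ is cofinal in $\mu$. Using a fixed cofinal scale $\langle \mu_\xi : \xi < \kappa \rangle$ in $\mu$, I would choose a $\kappa$-sized $Y \subseteq X$ whose consecutive gaps carry prescribed bulk of $X$-mass: for each $\xi$, $|X \cap [y_\xi, y_{\xi+1})| \geq \mu_\xi$. Applying cofinality of $\mathcal{C}$ yields some $C \supseteq Y$ in the family. Each strip $[y_\xi, y_{\xi+1})$ is then further subdivided by the points of $C \setminus Y$ into consecutive $C$-intervals, and summing the $X$-contributions over $A$-indexed $C$-intervals, respectively over $B$-indexed ones, should witness $|X \cap S_C| = |X \setminus S_C| = \mu$, because the sizes $|X \cap [y_\xi, y_{\xi+1})|$ are already cofinal in $\mu$ and each strip provides content on at least one $A$-interval and one $B$-interval once the interior is refined.

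The main obstacle is precisely this index-matching: inserting $\kappa$ extra points into $Y$ to form $C$ may collapse the pre-planned balance by pushing the $y_\xi$ into positions lying entirely in $A$ (or entirely in $B$), so that the refinement of the strips falls on only one side of the partition. The standard remedies are either to enlarge $Y$ with controlled padding so that the enumeration position of each $y_\xi$ inside $C$ is determined up to a fixed known shift, or to enlarge the candidate family by including the $\kappa$-many tail-reenumerations $S_{C,\delta}$ obtained by dropping an initial segment of $C$ of length $\delta < \kappa$; either variant keeps the total size at $\theta \cdot \kappa = \theta$. This combinatorial bookkeeping is the technical heart of the argument; once a working variant is fixed, the estimate $\smu \leq \theta = \cf{[\mu]^\kappa,\subseteq}$ follows.
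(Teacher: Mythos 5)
Your reduction of a cofinal family in $([\mu]^\kappa,\subseteq)$ to a splitting family is the right high-level strategy, but the proof is not complete: the ``index-matching'' obstacle you flag at the end is exactly the point where the argument lives or dies, and neither proposed remedy resolves it. The tail-reenumeration fix fails because the displacement between the position of $y_\xi$ in $Y$ and its position in the enumeration of $C\supseteq Y$ is not uniform in $\xi$, so no single initial-segment drop realigns all (or even cofinally many on each side) of the strips; worse, even for $\kappa=\omega$ with the even/odd partition, if every ``heavy'' $C$-interval happens to sit at an even position then every shift $\delta$ places all of them on one side of the partition simultaneously, so no $S_{C,\delta}$ splits $X$. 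The ``controlled padding'' fix is not available either, since you have no control over which extra points an arbitrary $C\supseteq Y$ from the fixed cofinal family inserts. As written, the argument only guarantees that one of $X\cap S_C$, $X\setminus S_C$ has size $\mu$, which is automatic.

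The paper's proof (Theorem \ref{theo:smu <= cf(mu^kappa)}) sidesteps enumeration positions entirely by using an invariant of the points themselves rather than of their index inside $x$: fixing a cofinal sequence $\langle \mu_i \mid i<\kappa\rangle$ of regular cardinals, it attaches to each $\alpha\in x$ with $\cf{\alpha}>\kappa$ the gap interval $(\beta^x_\alpha,\alpha)$ with $(\beta^x_\alpha,\alpha)\cap x=\emptyset$, and puts that interval into $B_x$ according to whether $\cf{\alpha}$ equals $\mu_{2i}$ or $\mu_{2i+1}$. Given $X\in[\mu]^\mu$, the test points $\delta_i=\sup\lkakko \gamma_\alpha \mid \alpha<\mu_i\rkakko$ are chosen by the prover to have cofinality exactly $\mu_i$, so their ``parity'' is determined in advance and is unaffected by which $x\supseteq\lkakko\delta_i \mid i<\kappa\rkakko$ the cofinal family supplies; moreover $|X\cap(\beta^x_{\delta_i},\delta_i)|=\mu_i$ automatically. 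Replacing your index-based bookkeeping with such an enumeration-independent labelling is what you need to close the gap.
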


By combining these two results, we obtain $\smu \leq \dmu$ for every singular $\mu$.

Main Theorems 3 and 4 focus on the consistency of $\dmu<2^\mu$. Main Theorem 3 shows that when $\mu$ has uncountable cofinality, the consistency follows easily.

\begin{maintheo}[see Theorem \ref{theo:main3}]
    Assume $\GCH$ and $\kappa\neq \omega$. Let $\pposet$ be a finite support iteration of the Cohen forcing $2^{< \omega}$ of length $\mu^{++}$. Then $\Vdash_{\pposet} \dmu < 2^{\mu}$.
\end{maintheo}

In contrast, Main Theorem 4 shows that if $\mu$ has countable cofinality, it is hard to obtain the consistency of $\dmu < 2^\mu$ with $\mu$-cc forcings.

\begin{maintheo}[see Theorem \ref{theo:main4}]
    Assume $\GCH$ and $\kappa = \omega$. Let $\lambda > \mu$ be a regular cardinal, and $\pposet$ be an atomless complete Boolean algebra with $|\pposet| = \lambda$. If $\pposet$ has $\mu$-cc and $\Vdash_\qposet \pposet / \dot{G} \text{ is non-trivial}$  for every complete subalgebra $\qposet\subseteq \pposet$ with $|\qposet| < |\pposet|$, then $\Vdash_{\pposet} \dmu =  \lambda = 2^\mu$.
\end{maintheo}

\section{Preliminaries}
First, we review the definition of the dominating number at singular cardinals. The more detailed basic properties can be found in the work of Shelah and Garti\cite{garti2018dear,shelah2020dmu}.

\begin{defi}\label{defi:generalized dominating}
    Let $\mu$ be a singular cardinal with cofinality $\kappa$.
    \begin{itemize}
        \item For $f, g \in \kappa^\mu$, we say that $f$ dominates $g$ if $\{ \alpha < \mu \mid f(\alpha) \leq g(\alpha) \}$ has size less than $\mu$.
        \item For $\mathcal{D}\subseteq \kappa^\mu$, we call $\mathcal{D}$ \textit{dominating family} if for any $f \in \kappa^\mu$, there is some $g \in \mathcal{D}$ such that $g$ dominates $f$.
        \item We define
        \[ \dmu = \min \lkakko |\mathcal{D}| \mid \mathcal{D} \text{ is a dominating family.} \rkakko \]
    \end{itemize}
\end{defi}

Next, we show the basics of almost distinct functions, which play a important role in this paper.

\begin{defi}
    Let $A$ be a set, and let $\kappa$ be cardinal. Let $\mathcal{I}$ be an ideal on $A$. Two functions $f, g \in \kappa^A$ are called \textit{$\mathcal{I}$-almost distinct} or \textit{$\mathcal{I}$-a.d.} if $\lkakko a \in A \mid f(a) = g(a) \rkakko \in \mathcal{I}$ holds.

    A family $\mathcal{F}\subseteq \kappa^A$ is an \textit{$\I$-almost distinct family of functions} or \textit{$\I$-a.d.f.} if the functions in $\mathcal{F}$ are pairwise $\I$-a.d.
\end{defi}

\begin{defi}
\begin{enumerate}
    \item For an ordinal $\alpha$ and a cardinal $\kappa$, define \textit{the bounded ideal on }$[\alpha]^\kappa$ by $\lkakko A \subseteq [\alpha]^\kappa \mid \exists x \in [\alpha]^\kappa \forall y \in A (x \nsubseteq y) \rkakko$.
    \item Let $A, B$ be sets. For ideals $\I$ and $\J$ on $A$ and $B$, we define \textit{the Fubini product ideal} $\I\times \J$ on $A \times B$ by
    \[ X \in \I\times \J \Longleftrightarrow \lkakko a \in A \mid  X_a \notin \J \rkakko \in \I, \]
    where $X_a = \lkakko b \in B \mid (a, b) \in X \rkakko$
    \item Let $\kappa$ be a cardinal. We say that an ideal $\I$ is $\kappa$\textit{-complete} if for every $\mu < \kappa$ and every sequence $\langle X_i \in  \I \mid i < \mu \rangle$, $\bigcup_{i < \mu} X_i \in \I$.
\end{enumerate}
\end{defi}

\begin{lem}\label{lem:transitivity of completeness}
    Let $\kappa \leq \lambda$ be cardinals and let $A, B$ be sets. Suppose
    \begin{itemize}
        \item $\I$ is a $\kappa$-complete ideal, and
        \item $\J$ is a $\lambda$-complete ideal.
    \end{itemize}
    Then $\I\times \J$ is a $\kappa$-complete ideal.
\end{lem}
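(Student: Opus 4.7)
The plan is to reduce the claim to a pointwise analysis of fibres. For any family $\langle X_i : i < \mu \rangle$ with $\mu < \kappa$, write $X = \bigcup_{i < \mu} X_i$ and observe that the fibre satisfies $X_a = \bigcup_{i<\mu}(X_i)_a$ for every $a \in A$. This identity is what lets the two completeness hypotheses act on distinct coordinates of the product.

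The key step is the inclusion
\[
\{a \in A : X_a \notin \J\} \subseteq \bigcup_{i<\mu} \{a \in A : (X_i)_a \notin \J\}.
\]
To verify this, suppose $a$ belongs to the left-hand side. Since $\kappa \leq \lambda$ and $\mu < \kappa$, we also have $\mu < \lambda$; so if every $(X_i)_a$ were in $\J$, then by $\lambda$-completeness of $\J$ the union $X_a$ would be in $\J$, contradicting $a$'s choice. Hence some $(X_i)_a \notin \J$, placing $a$ in the right-hand side.

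Now the right-hand side is a union of $\mu < \kappa$ many sets, each of which lies in $\I$ because $X_i \in \I \times \J$. By $\kappa$-completeness of $\I$, this union lies in $\I$, and then by downward closure of $\I$ (which I would check in a short preliminary step, together with $\emptyset \in \I \times \J$ and closure under finite unions, to confirm $\I \times \J$ is actually an ideal) the set $\{a : X_a \notin \J\}$ lies in $\I$ as well. Thus $X \in \I \times \J$, establishing $\kappa$-completeness.

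I do not expect any real obstacle here; the only place where care is needed is to notice that the two completeness hypotheses are used at different stages, the $\lambda$-completeness of $\J$ on the fibre side to push the union into $\J$, and the $\kappa$-completeness of $\I$ on the base side to handle the $\mu$-indexed union of $\I$-small exceptional sets. The hypothesis $\kappa \leq \lambda$ is precisely what makes the former applicable for sequences of length $<\kappa$.
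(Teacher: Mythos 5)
Your proof is correct and follows essentially the same route as the paper's: both arguments reduce to the fibrewise identity $X_a=\bigcup_{i<\mu}(X_i)_a$, apply the $\lambda$-completeness of $\J$ on each fibre to obtain the inclusion $\lkakko a : X_a\notin\J\rkakko\subseteq\bigcup_{i<\mu}\lkakko a : (X_i)_a\notin\J\rkakko$, and then apply the $\kappa$-completeness of $\I$ to the base. The only cosmetic difference is that the paper states this as an equality of sets while you prove one inclusion and finish with downward closure of $\I$, which is an equally valid (and slightly more economical) way to conclude.
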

\begin{proof}
    Let $\mu < \kappa$. Take a sequence $\langle X_i \mid i < \mu \rangle$ in $\I\times \J$. By the definition of $\I\times \J$, for any $i < \mu$, $\lkakko a \in A \mid (X_i)_a \notin \J \rkakko \in \I$. Hence
    \[ \lkakko a \in A \relmiddle{|} \left(\bigcup_{i< \mu} X_i\right)_a \notin \J \rkakko = \bigcup_{i< \mu}\lkakko a \in A \relmiddle{|} \left( X_i\right)_a \notin \J \rkakko \in \I. \]
    This completes the proof.
\end{proof}

\section{Almost disjoint and Dominating numbers}
\textbf{From now on, again, suppose that $\mu$ is a singular cardinal with cofinality $\kappa$.}

In this section, we construct some a.d.f.s. First, we show that constructing a large a.d.f. immediately yields a lower bound for $\dmu$.

\begin{theo}\label{theo:computing dmu from adf}
    Let $\delta \leq \mu$ be an ordinal and $\I$ be a $\kappa^+$-complete proper ideal on $\delta$. If there is an $\I$-a.d.f. $\mathcal{F} \subseteq \kappa ^\delta$, then $\dmu \geq |\mathcal{F}|$.
\end{theo}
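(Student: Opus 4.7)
The plan is to show that every dominating family $\mathcal{D}\subseteq\kappa^\mu$ satisfies $|\mathcal{D}|\geq|\mathcal{F}|$; the driving observation is that no single $g\in\kappa^\mu$ can dominate $\kappa$-many pairwise $\I$-almost-distinct functions, for on a sufficiently large set they would produce $\kappa$ pairwise distinct ordinals all strictly below $g(\alpha)<\kappa$, which is absurd.

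First I would reduce to the case $\delta=\mu$ together with an ideal that also absorbs $[\mu]^{<\mu}$. If $\delta<\mu$, pick a surjection $\phi\colon\mu\twoheadrightarrow\delta$ each of whose fibres $\phi^{-1}(\beta)$ has cardinality $\mu$ (possible since $\delta\leq\mu$). Set $\tilde f:=f\circ\phi$ for $f\in\mathcal{F}$ and $\tilde{\I}:=\{A\subseteq\mu:\phi(A)\in\I\}$. One checks routinely that $\tilde{\I}$ is a $\kappa^+$-complete proper ideal on $\mu$ and $\tilde{\mathcal{F}}:=\{\tilde f:f\in\mathcal{F}\}$ is a $\tilde{\I}$-a.d.f.\ of the same cardinality as $\mathcal{F}$. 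The $\mu$-sized fibres give the decisive extra property: if $|B|<\mu$ then each fibre of $\phi$ meets $\mu\setminus B$, so $\phi(\mu\setminus B)=\delta\notin\I$ and hence $\mu\setminus B\notin\tilde{\I}$; consequently the ideal $\J:=\tilde{\I}\vee[\mu]^{<\mu}$ remains proper on $\mu$.

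Next I would establish the core lemma: for every $g\in\kappa^\mu$, the set $X_g:=\{f\in\tilde{\mathcal{F}}:g\text{ dominates }f\}$ has $|X_g|\leq\kappa$. Suppose otherwise and pick $\{f_\xi:\xi<\kappa^+\}\subseteq X_g$. Each $B_\xi:=\{\alpha<\mu:g(\alpha)\leq f_\xi(\alpha)\}$ has $|B_\xi|<\mu$, so by pigeonholing the $|B_\xi|$ against a cofinal sequence in $\mu$ of length $\kappa=\cf{\mu}$ one may pass to a subfamily, still of size $\kappa^+$, for which $|B_\xi|<\nu$ for some fixed $\nu<\mu$. From this subfamily pick any $\kappa$ of the $f_\xi$; then $\bigcup_\xi B_\xi$ has size at most $\kappa\cdot\nu<\mu$, hence lies in $[\mu]^{<\mu}$, while $\bigcup_{\xi\neq\eta}E_{\xi\eta}$ (with $E_{\xi\eta}:=\{\alpha:f_\xi(\alpha)=f_\eta(\alpha)\}$) is a union of at most $\kappa$ sets in $\tilde{\I}$, so lies in $\tilde{\I}$ by $\kappa^+$-completeness. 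Their union $D$ lies in $\J$; since $\J$ is proper, $\mu\setminus D\neq\emptyset$, and for any $\alpha$ in it, $\{f_\xi(\alpha):\xi<\kappa\}$ is a set of $\kappa$ pairwise distinct ordinals strictly below $g(\alpha)<\kappa$, a contradiction.

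Combining these two steps yields $|\mathcal{F}|=|\tilde{\mathcal{F}}|\leq\sum_{g\in\mathcal{D}}|X_g|\leq|\mathcal{D}|\cdot\kappa$. A separate easy observation---that any family of fewer than $\kappa$ functions in $\kappa^\mu$ fails to dominate its own pointwise supremum plus $1$, which still lies in $\kappa^\mu$ by regularity of $\kappa$---gives $\dmu\geq\kappa$, so $|\mathcal{D}|\geq\kappa$ and therefore $|\mathcal{F}|\leq|\mathcal{D}|$; taking the infimum over dominating $\mathcal{D}$ yields $\dmu\geq|\mathcal{F}|$. The main obstacle is arranging the reduction to $\mu$ so that $\tilde{\I}$ can be enlarged by $[\mu]^{<\mu}$ without losing properness---without this compatibility the sets $B_\xi$ might fall outside the operative ideal and the core contradiction cannot be closed. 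The $\mu$-sized-fibre choice of $\phi$ secures exactly this, after which the rest is a single pigeonhole combined with the $\kappa^+$-completeness of $\tilde{\I}$.
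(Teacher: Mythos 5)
Your argument is correct, and its skeleton is the same as the paper's: transfer $\mathcal{F}$ to $\kappa^\mu$ along a surjection onto $\delta$, write the transferred family as a union over $d \in \mathcal{D}$ of the functions that $d$ dominates, and bound each piece by $\kappa$ via the observation that at a point avoiding the ($\kappa^+$-complete) union of the pairwise agreement sets, $\kappa$ many dominated functions would produce $\kappa$ distinct ordinals below $d(\alpha) < \kappa$. Where you genuinely diverge is in how you handle the fact that domination is only modulo sets of size $<\mu$. The paper's proof asserts $\tilde{f}(\alpha) \leq d(\alpha)$ at the chosen $\alpha$ ``by the definition of $\mathcal{F}_d$'', but that $\alpha$ was selected only so that $\sigma(\alpha) \notin A$ for the $\I$-small set $A$, and it may well lie in the exceptional set of some $f \in \mathcal{F}'$; as written there is a gap there. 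You repair it with two devices: (i) choosing the surjection $\phi$ with $\mu$-sized fibres, so that the pullback ideal $\tilde{\I}$ is compatible with $[\mu]^{<\mu}$ and their join stays proper, and (ii) a pigeonhole on the cardinalities of the exceptional sets $B_\xi$, which is needed because a union of $\kappa = \cf{\mu}$ many sets of size $<\mu$ can have size $\mu$; fixing a bound $\nu < \mu$ first makes the union of $\kappa$ of them small. This buys a complete argument at the cost of working with $\kappa^+$ rather than $\kappa$ functions, which is harmless for the final count $|\mathcal{F}| \leq |\mathcal{D}|\cdot\kappa = |\mathcal{D}|$. Two minor points: the fibre construction should be invoked for $\delta = \mu$ as well (the identity surjection does not yield the compatibility with $[\mu]^{<\mu}$ that your core lemma uses, but a $\mu$-to-one surjection of $\mu$ onto itself does), and your closing observation that $\dmu \geq \kappa$ is exactly the implicit step the paper also needs to absorb the factor $\kappa$.
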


\begin{proof}
    Let $\mathcal{D}\subseteq \kappa^\delta$ be a dominating family with $|\mathcal{D}| = \dmu$ and $\sigma: \mu \rightarrow \delta$ be a surjection. For $f \in \mathcal{F}$, Define $\tilde{f} \in \kappa^\mu$ by $\tilde{f}(\alpha) = f(\sigma(\alpha))$ for all $\alpha< \mu$. For each $d\in \mathcal{D}$, let $\mathcal{F}_d = \{ f\in \mathcal{F} \mid \tilde{f} \leq d \}$. Since $\mathcal{D}$ be a dominating family, $\mathcal{F} = \bigcup_{d\in \mathcal{D}}\mathcal{F}_{d}$ holds. 
    \begin{cla}
        $|\mathcal{F}_d| < \kappa$ for all $d\in \mathcal{D}$.
    \end{cla}
    \begin{proof}[Proof of claim]
        If not, there exists $d \in \mathcal{D}$ such that $\mathcal{F}_d$ has size at least $\kappa$. Take any subset $\mathcal{F}'\subseteq \mathcal{F}_d$ with $|\mathcal{F}'| =\kappa$. Since $\mathcal{F}$ is an $\I$-a.d.f, $\lkakko \eta <\delta \mid f_0(\eta) = f_1(\eta) \rkakko$ is in $\I$ for every $f_0 \neq f_1$ in $\mathcal{F}'$. 
        Then $A = \bigcup_{f_0, f_1 \in \mathcal{F}', f_0 \neq f_1} \lkakko \eta < \delta \mid f_0(\eta) = f_1(\eta) \rkakko \in \I$ since $\I$ is $\kappa^+$-complete. Since $\I$ is proper and $\sigma$ is a surjection, we can find $\alpha < \mu$ such that  $\sigma(\alpha) \notin A$. By the choice of $\alpha$, $\tilde{f_0}(\alpha) \neq \tilde{f_1}(\alpha)$ for all $f_0, f_1\in \mathcal{F}'$ with $f_0 \neq f_1$. Therefore, $\lkakko \tilde{f}(\alpha) \mid f\in \mathcal{F}_d \rkakko \supseteq \lkakko \tilde{f}(\alpha) \mid f\in \mathcal{F}' \rkakko$ is unbounded in $\kappa$. However, by the definition of $\mathcal{F}_d$, $\tilde{f}(\alpha) \leq d(\alpha)$ for all $f \in \mathcal{F}_d$. This is a contradiction.
    \end{proof}
    By the claim above, $|\mathcal{F}| = \left| \bigcup_{d\in \mathcal{D}} \mathcal{F}_d \right| \leq |\mathcal{D}| \cdot \kappa =\dmu$.
\end{proof}

Hence, to obtain a lower bound for $\dmu$, it suffices to construct an a.d.f. of large cardinality.
The following proposition describes how to combine a.d.f.s to produce a new a.d.f.

\begin{prop}\label{prop:combine adfs}
    Let $\lambda_0, \lambda_1, \lambda_2$, and $\lambda_3$ be cardinals with $\lambda_1 \leq \lambda_2$, and let $\I, \J$ be ideals on $A, B$, respectively. Suppose
    \begin{itemize}
        \item $\mathcal{F}\subseteq \lambda_1^A$ be an $\I$-a.d.f. with $|\mathcal{F}| = \lambda_0$, and
        \item $\mathcal{G}\subseteq \lambda_3^B$ be a $\J$-a.d.f. with $|\mathcal{G}| = \lambda_2$.
    \end{itemize}
    Then there is an $\I \times \J$-a.d.f. $\mathcal{H}\subseteq \lambda_3^{A\times B}$ such that $|\mathcal{H}| = \lambda_0$.
\end{prop}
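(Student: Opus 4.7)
The plan is to use each function $f\in\mathcal{F}$ as a recipe for selecting, at every coordinate $a\in A$, one of the functions from $\mathcal{G}$; the assumption $\lambda_1\leq\lambda_2$ is exactly what makes this possible. Concretely, I would fix an enumeration $\mathcal{G}=\{g_\alpha : \alpha<\lambda_2\}$ (possible since $|\mathcal{G}|=\lambda_2$), and for each $f\in\mathcal{F}$ define $h_f:A\times B\to \lambda_3$ by
\[
h_f(a,b)=g_{f(a)}(b).
\]
Since $f(a)<\lambda_1\leq\lambda_2$, the index $g_{f(a)}$ always makes sense. I would then set $\mathcal{H}=\{h_f : f\in\mathcal{F}\}$.

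For the cardinality claim $|\mathcal{H}|=\lambda_0$, I would check that $f\mapsto h_f$ is injective. Given $f\neq f'$ in $\mathcal{F}$, pick any $a\in A$ with $f(a)\neq f'(a)$ (such $a$ exists because $\mathcal{F}$ is in particular a family of distinct functions). Then $g_{f(a)}$ and $g_{f'(a)}$ are two distinct members of $\mathcal{G}$, so there is $b\in B$ with $g_{f(a)}(b)\neq g_{f'(a)}(b)$, i.e.\ $h_f(a,b)\neq h_{f'}(a,b)$.

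The main content is verifying that $\mathcal{H}$ is an $\I\times\J$-a.d.f. Fix $f\neq f'$ in $\mathcal{F}$ and set $X=\{(a,b)\in A\times B \mid h_f(a,b)=h_{f'}(a,b)\}$. I would analyze the sections $X_a=\{b\in B\mid g_{f(a)}(b)=g_{f'(a)}(b)\}$ by splitting on whether $f$ and $f'$ agree at $a$. If $f(a)\neq f'(a)$, then $g_{f(a)}$ and $g_{f'(a)}$ are distinct elements of the $\J$-a.d.f.\ $\mathcal{G}$, so $X_a\in\J$. Consequently
\[
\{a\in A\mid X_a\notin\J\}\subseteq \{a\in A\mid f(a)=f'(a)\},
\]
and the right-hand side lies in $\I$ because $\mathcal{F}$ is an $\I$-a.d.f. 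By definition of the Fubini product this gives $X\in\I\times\J$, completing the proof.

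I do not anticipate a real obstacle here; the proof is essentially bookkeeping once the correct definition of $h_f$ is in place. The only conceptually delicate point is remembering that the inequality $\lambda_1\leq\lambda_2$ is doing genuine work: it guarantees that the value $f(a)<\lambda_1$ can always be fed into an enumeration of a family of size $\lambda_2$, which is what lets the product a.d.f.\ inherit size $\lambda_0$ from $\mathcal{F}$ rather than something smaller.
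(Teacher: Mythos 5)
Your proof is correct and essentially identical to the paper's: the paper fixes an injection $\pi:\lambda_1\to\mathcal{G}$ and sets $h_f(a,b)=\pi(f(a))(b)$, which is exactly your injective enumeration, and the verification that $\{a\mid X_a\notin\J\}\subseteq\{a\mid f(a)=f'(a)\}\in\I$ is the same. Your explicit check that $f\mapsto h_f$ is injective is a small addition the paper leaves implicit, but nothing more.
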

\begin{proof}
    Since $\lambda_1 \leq \lambda_2 = |\mathcal{G}|$, we can take an injection $\pi$ from $\lambda_1$ to $\mathcal{G}$. For $f \in \mathcal{F}$, define $h_f \in \lambda_3^{A\times B}$ by $h_f(a, b) = \pi(f(a))(b)$. We show that $\mathcal{H} = \lkakko h_f \mid f\in \mathcal{F} \rkakko$ is as desired. Let $f, f' \in \mathcal{F}$ and $f\neq f'$. It suffices to show that $X = \lkakko (a,b) \in A\times B \mid h_f(a, b) = h_{f'} (a, b) \rkakko \in \I \times \J$. Since $\pi$ is an injection and $\mathcal{G}$ is a $\J$-a.d.f, if $f(a) \neq f'(a)$, then $X_a \in \mathcal{J}$. Therefore, 
    \[ \lkakko a \in A \mid  X_a \notin \J \rkakko \subseteq \lkakko a \in A \mid f(a) = f'(a) \rkakko \in \I \]
    and this completes the proof.
\end{proof}

Let $A$ be a set and $\I$ be an ideal on $A$. For cardinals $\lambda_0$ and $\lambda_1$, regularly, we shall represent an a.d.f. $\mathcal{F}\subseteq \lambda_1^A$ with $|\mathcal{F}|=\lambda_0$ in the table below.

    \begin{table}[H]
    \begin{center}
\begin{tabular}{c|cccc}
              & size        & dom & ran         & ideal \\ \hline
$\mathcal{F}$ & $\lambda_0$ & $A$ & $\lambda_1$ & $\I$ 
\end{tabular}
\end{center}
\end{table}

Then \Cref{prop:combine adfs} asserts the following rule for transforming such tables.

\begin{table}[H]
\begin{center}
    \begin{tabular}{c|cccc}
              & size        & dom & ran         & ideal \\ \hline
$\mathcal{F}$ & $\lambda_0$ & $A$ & $\lambda_1$ & $\I$  \\
$\mathcal{G}$ & $\lambda_2( \geq \lambda_1)$ & $B$ & $\lambda_3$ & $\J$  \\ \hline \hline
$\mathcal{H}$ & $\lambda_0$ & $A \times B$ & $\lambda_3$ & $\I \times \J$  \\
\end{tabular}
\end{center}
\end{table}

Before constructing an a.d.f, we show the $\kappa^+$-completeness of the bounded ideal on $([\alpha]^\kappa, \subseteq)$. 

\begin{lem}\label{lem:completenes of the bounded ideal}
    Let $\alpha \geq \kappa^+$ and $C$ be a cofinal subset in $([\alpha]^\kappa, \subseteq)$. Let $\I$ be the restriction of the bounded ideal on $([\alpha]^\kappa, \subseteq)$ to $C$. Then $\I$ is $\kappa^+$-complete.
\end{lem}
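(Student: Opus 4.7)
The plan is to produce, for any sequence of at most $\kappa$ sets in $\I$, a single element of $[\alpha]^\kappa$ that simultaneously witnesses boundedness of their union. So fix $\mu \leq \kappa$ and a sequence $\langle A_i \mid i < \mu \rangle$ in $\I$. By unpacking the definition of the bounded ideal, for each $i < \mu$ there is some $x_i \in [\alpha]^\kappa$ such that $x_i \nsubseteq y$ for every $y \in A_i$. The natural candidate for a joint witness is $x = \bigcup_{i < \mu} x_i$, whose cardinality is at most $\kappa \cdot \mu \leq \kappa$.

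If $|x| < \kappa$, I would enlarge $x$ to some $x' \in [\alpha]^\kappa$ of cardinality exactly $\kappa$; this is possible because the hypothesis $\alpha \geq \kappa^+$ guarantees $|\alpha \setminus x| \geq \kappa$. Then $x'$ witnesses that $\bigcup_{i<\mu} A_i$ lies in the bounded ideal on $[\alpha]^\kappa$: any $y$ in the union belongs to some $A_i$, hence $x_i \nsubseteq y$ by choice of $x_i$, and since $x_i \subseteq x \subseteq x'$, this forces $x' \nsubseteq y$. Because $\bigcup_{i<\mu} A_i \subseteq C$, the union is in the restricted ideal $\I$.

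There is no substantial obstacle to this argument; the only subtle point is ensuring that the combined witness $x'$ has cardinality \emph{exactly} $\kappa$, and this is precisely where the hypothesis $\alpha \geq \kappa^+$ is consumed. Everything else is just the observation that a union of at most $\kappa$-many sets of size $\kappa$ still has size at most $\kappa$, together with the obvious monotonicity $x_i \subseteq x' \Rightarrow (x' \subseteq y \Rightarrow x_i \subseteq y)$.
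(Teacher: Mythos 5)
Your argument is correct and is essentially the same as the paper's: take the union of the individual witnesses $x_i$ and use monotonicity of $\subseteq$. The only difference is your extra step of enlarging $x$ to size exactly $\kappa$, which is actually unnecessary, since each $x_i \in [\alpha]^\kappa$ already has cardinality $\kappa$, so $|x| = \kappa$ automatically.
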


\begin{proof}
    Note that $\I = \lkakko A\subseteq C \mid \exists  x \in  [\alpha]^\kappa \forall
    y \in A (x \nsubseteq y) \rkakko$. Let $\lkakko A_i \mid i < \kappa \rkakko$ be a subset of $\I$. It suffices to show that $A = \bigcup_{i < \kappa} A_i \in \I$. For each $i < \kappa$, by the definition of the bounded ideal, we can find $x_i \in [\alpha]^\kappa$ such that $x_i \nsubseteq y$ for all $y \in A_i$. Let $x= \bigcup _{i < \kappa}x_i$. Then for all $i < \kappa$ and $y \in A_i$, $x_i\subseteq x$, so $x \nsubseteq y$. Therefore $x$ witnesses $A \in \I$.
\end{proof}

In Theorems \ref{theo:existence of large adf} and \ref{theo:adf filling gap} below, we construct the a.d.f.s required for the main theorem.

\begin{theo}\label{theo:existence of large adf}
    Let $\alpha < \mu$ be an ordinal. Suppose that 
    \begin{itemize}
        \item $2^\kappa < \mu^{+\kappa^+}$,
        \item $\cf{[\alpha]^\kappa, \subseteq} < \mu$, and
        \item $2^\kappa < 2^\alpha$.
    \end{itemize}
    Let $C$ be a cofinal subset in $([\alpha]^\kappa , \subseteq)$ with $|C| < \mu$, and $\I$ be the restriction of the bounded ideal on $[\alpha]^\kappa$ to $C$. Then for any regular cardinal $\lambda \in (2^\kappa, 2^\alpha]$, there are an ordinal $\delta<\mu$ and an $\I$-a.d.f. $\mathcal{F} \subseteq \delta^C$ such that $|\mathcal{F}| = \lambda$.

        \begin{table}[H]
    \begin{center}
\begin{tabular}{c|cccc}
              & size        & dom & ran         & ideal \\ \hline
$\mathcal{F}$ & $\lambda ( \in (2^\kappa, 2^\alpha])$ & $C$ & $\delta(<\mu)$ & $\I$ 
\end{tabular}
\end{center}
\end{table}
\end{theo}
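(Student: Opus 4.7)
The plan is to exhibit the required a.d.f.\ by explicitly encoding a fixed subset $R \subseteq 2^\alpha$ of size $\lambda$ (which exists because $|2^\alpha| \geq \lambda$) as functions $f_r : C \to \delta$, arranging the encoding so that distinct $r, r' \in R$ yield $f_r, f_{r'}$ whose agreement set is $\I$-bounded. The natural encoding records $r \restriction x$ at each $x \in C$, and the task is to compress the ``alphabet'' of the code below $\mu$.

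In the easy case $2^\kappa < \mu$, I would take $\delta = 2^\kappa$, fix for each $x \in C$ a bijection $\varphi_x : 2^x \to 2^\kappa$, and set $f_r(x) := \varphi_x(r \restriction x)$. Whenever $r, r' \in R$ differ at some $\xi_0 < \alpha$, pick any $y \in [\alpha]^\kappa$ with $\xi_0 \in y$ (possible because $|\alpha| \geq \kappa$, which follows from $2^\kappa < 2^\alpha$); for every $x \in C$ with $y \subseteq x$ the restrictions $r \restriction x$ and $r' \restriction x$ differ, hence so do $f_r(x)$ and $f_{r'}(x)$. Thus the agreement set is contained in $\{x \in C : y \not\subseteq x\}$, which is in $\I$ by definition of the bounded ideal; and $\{f_r : r \in R\}$ is the desired $\I$-a.d.f.\ of size $\lambda$.

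The main obstacle is the case $2^\kappa \geq \mu$, where the naive alphabet $\delta = 2^\kappa$ exceeds $\mu$ and a more delicate encoding is needed. Here I would exploit both the bound $2^\kappa < \mu^{+\kappa^+}$ and the $\kappa^+$-completeness of $\I$ guaranteed by \Cref{lem:completenes of the bounded ideal}. The plan is to decompose the encoding of $r \restriction x$ into $\kappa$-many stages, each carrying only part of the information and using an alphabet strictly below $\mu$, and then recombine the stage-wise a.d.f.s through the Fubini product machinery of \Cref{prop:combine adfs}. The $\kappa^+$-completeness ensures that a $\kappa$-indexed union of per-stage agreement sets, each lying in the relevant bounded ideal, still lies in $\I$, so the composite code retains the a.d.\ property; the precise bound $2^\kappa < \mu^{+\kappa^+}$ is what forces such a $\kappa$-stage decomposition to close up inside some $\delta < \mu$. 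The hard part will be choosing the stage-wise alphabets and selecting $R$ coherently so that the combined $\delta$ stays below $\mu$ while $|\mathcal{F}| = \lambda$ is preserved; the latter follows from the a.d.\ property together with $\I$ being a proper ideal.
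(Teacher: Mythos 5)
Your first case ($2^\kappa<\mu$) coincides with the paper's basic encoding --- the paper uses $f_X(x)=X\cap x$, which is your $r\restriction x$ up to the bijections $\varphi_x$ --- and your verification of almost-distinctness is correct. But this is the easy half; the entire content of the theorem is the case $2^\kappa\geq\mu$, and there you offer a plan, not a proof, and the plan does not work as described. Two concrete problems. First, recombining stage-wise families via \Cref{prop:combine adfs} yields an a.d.f.\ on a product domain with respect to a Fubini product ideal, whereas the theorem asks for a family on $C$ itself that is a.d.\ modulo the restriction $\I$ of the bounded ideal to $C$; so the output of your recombination step is not an instance of the statement being proved. Second, there is no reason a decomposition of the code $r\restriction x$ into $\kappa$ stages with alphabets of size $<\mu$ should exist: splitting $x$ into $\kappa$ smaller pieces only lowers the per-stage alphabet to $2^{<\kappa}$, which can still be $\geq\mu$, and the hypothesis $2^\kappa<\mu^{+\kappa^+}$ provides no such decomposition by itself. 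You explicitly defer ``choosing the stage-wise alphabets'' as ``the hard part'', and that is precisely the part that is missing.

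What the paper actually does in this case is not to compress the code but to shrink the index set. Call $\Y\subseteq\X$ $\nu$-good if $|\lkakko f_X(x)\mid X\in\Y\rkakko|\leq\nu$ for every $x\in C$; the full family $\X$ is $2^\kappa$-good. The key lemma is that every $\lambda$-sized $\nu$-good family with $\nu\in[\mu,2^\kappa]$ contains a $\lambda$-sized $\nu'$-good subfamily for some $\nu'<\nu$; iterating drives the goodness below $\mu$, which yields the required $\delta$. For regular $\nu$ this is a pigeonhole argument: write $\lkakko f_X(x)\mid X\in\Y\rkakko$ as an increasing union of $\nu$ sets of size $<\nu$, use $|C|<\mu\leq\nu$ to assign to each $X$ a single stage $\xi_X<\nu$ working for all $x\in C$ simultaneously, and use the regularity of $\lambda>2^\kappa\geq\nu$ to extract $\lambda$ many $X$ with the same $\xi_X$. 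For singular $\nu$, the hypothesis $2^\kappa<\mu^{+\kappa^+}$ enters exactly here: it forces $\cf{\nu}\leq\kappa$, so that a stage can be chosen for each $X$ using the fact that $C$ is not the union of $\kappa$ many non-cofinal subsets of $([\alpha]^\kappa,\subseteq)$ (the same argument as in \Cref{lem:completenes of the bounded ideal}), followed by a further injection argument to bound the goodness of the resulting subfamily. None of this shrinking machinery, nor the role of the regularity of $\lambda$, appears in your sketch, so the essential case remains unproved.
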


\begin{proof}
    Let $\X$ be a subset of $\mathcal{P}(\alpha)$ with $|\X| = \lambda$. For $X\in \X$ and $x \in C$, define $f_X(x) = X \cap x$. 
    \begin{cla}
        $\lkakko f_X \mid X \in \X \rkakko$ is an $\I$-a.d.f.
    \end{cla}
    \begin{proof}[Proof of claim]
        Let $X, Y \in \X$ and $X\neq Y$. It is sufficient to show that $\lkakko x \in C \mid f_X(x) = f_Y(x) \rkakko \in \I$. Fix $\beta \in X \triangle Y$. If $\beta \in x$, then $f_X(x) = X\cap x \neq Y\cap x =f_Y(x)$. Therefore, 
        \[ \lkakko x \in C \mid f_X(x) = f_Y(x) \rkakko \subseteq  \lkakko x \in C \mid \lkakko \beta \rkakko \nsubseteq x \rkakko \in \I.\]
    \end{proof}
    For $\Y \subseteq \X$ and a cardinal $\nu$, we call $\Y$ $\nu$\textit{-good} if $|\lkakko f_X(x) \mid X \in \Y \rkakko| \leq \nu$ for all $x \in C$. Clearly, $\X$ is $2^\kappa$-good.
    \begin{cla}
        For $\nu \in [\mu, 2^\kappa]$, if $\Y \subseteq \X$ satisfies that $|\Y|=\lambda$ and $\Y$ is $\nu$-good, then there is some $\Y'\subseteq \Y$ and a cardinal $\nu'<\nu$ such that $|\Y'| = \lambda$ and $\Y'$ is $\nu'$-good.
    \end{cla}
    \begin{proof}[Proof of claim]
        (Case 1: $\nu$ is regular) For any $x\in C$, take a sequence $\langle A_x^\xi \mid \xi < \nu \rangle$ such that
        \begin{itemize}
            \item $\bigcup_{\xi < \nu}A_x^\xi = \lkakko f_X(x)\mid X \in \Y \rkakko$, and
            \item if $\xi < \zeta$, then $A_x^\xi \subseteq A_x^\zeta$, and
            \item for every $\xi < \nu$, $|A_x^\xi| \leq |\xi|$.
        \end{itemize}
        Then for any $X \in \Y$, we can choose $\xi_X <\nu$ such that $f_X(x) \in A_x^{\xi_X}$ for all $x \in C$ since $|C| < \mu \leq \nu$ and $\nu$ is regular. By the regularity of $\lambda$ and applying the pigeonhole principle for the map $X\mapsto \xi_X$, we obtain $\Y' \subseteq \Y$ and $\xi < \nu$ such that $|\Y'| = |\Y| = \lambda$ and $\xi_X =\xi$ for all $X \in \Y'$. Thus $\nu'=|\xi|$ is as desired.

        (Case 2: $\nu$ is singular) Since $2^\kappa < \mu^{+\kappa^+}$, $\cf{\nu} \leq \kappa$. Let $\langle \nu_\xi \mid \xi < \cf{\nu} \rangle$ be a cofinal increasing sequence in $\nu$. For any $x\in C$, take a sequence $\langle A_x^\xi \mid \xi < \cf{\nu} \rangle$ such that
        \begin{itemize}
            \item $\bigcup_{\xi < \nu}A_x^\xi = \lkakko f_X(x)\mid X \in \Y \rkakko$, and
            \item if $\xi < \zeta$, then $A_x^\xi \subseteq A_x^\zeta$, and
            \item for every $\xi < \nu$, $|A_x^\xi| \leq |\nu_\xi|$.
        \end{itemize}
        Let us define $C_\xi^X = \lkakko x \in C \mid  f_X(x) \in A_x^{\xi_X} \rkakko$ for all $\xi < \cf{\nu}$ and $X \in \X$.
        \begin{subcla}
            For every $X \in \Y$, there exists some $\xi_X < \cf{\nu}$ such that $C_{\xi_X}^X$ is cofinal in $([\alpha]^\kappa, \subseteq)$.
        \end{subcla}
        \begin{proof}[Proof of subclaim]
            For contradiction, suppose that $C_\xi^X$ is not cofinal for all $\xi < \cf{\nu}$. For $\xi < \cf{\nu}$, we can choose $x_\xi \in [\alpha]^\kappa$ such that $x_\xi \nsubseteq y$ for all $y \in C_\xi^X$. Then $x = \bigcup_{\xi < \cf{\nu}} x_\xi$ witnesses that $C = \bigcup_{\xi < \cf{\nu}} C_\xi^X$ is not cofinal. This is a contradiction.
        \end{proof}
        By the regularity of $\cf{\lambda}$ and applying the pigeonhole principle for the map $X\mapsto \xi_X$, we obtain $\Y' \subseteq \Y$ and $\xi < \cf{\nu}$ such that $|\Y'| = |\Y| = \lambda$ and $\xi_X =\xi$ for all $X \in \Y'$. Next, we show that $\Y'$ is $|C|\cdot \nu_\xi$-good. To prove this, it suffices to construct an injection $\pi_x$ from $\lkakko f_X(x) \mid X\in \Y' \rkakko$ to $\bigcup_{y \in C} \lkakko y \rkakko \times A_y^\xi$ for all $x \in C$. 
        
        Let $p$ be an element of $\lkakko f_X(x) \mid X\in \Y' \rkakko$. Take an $X\in \Y'$ such that $f_X(x) = p$. By the construction of $\Y' \subseteq \Y$, we can take $y \in C^X _\xi \subseteq C$ such that $x \subseteq y$ and $f_X(y) \in A_y^\xi$. Here, define $\pi_x(p) = (y, f_X(y))$. Let $X, Y \in \Y'$ and $p = f_X(x), q = f_Y(x)$, and assume that $\pi_x(p) = \pi_x (q)$. Then there is $y \in C$ with $x \subseteq y$ such that $(y, f_X(y)) = \pi_x(p) = \pi_x(q) = (y, f_Y(y))$. By the definition of the functions, this implies $X \cap y = f_X(x) = f_Y(y) = Y \cap y$. Since $x \subseteq y$, $p=f_X(x) = X\cap x = Y \cap x = f_Y(x) = q$. Therefore, $\pi_x$ is an injection.
    \end{proof}
    By applying the claim above successively, eventually, we obtain $\Y\subseteq \X$ and $\delta < \mu$ such that $\Y$ is $\delta$-good and $|\Y| = \lambda$. Therefore, $\F = \lkakko f_X \mid X \in \Y \rkakko$ is as desired.
\end{proof}

\begin{theo}\label{theo:adf filling gap}
    Let $\theta < \mu$ be a cardinal. Let $D$ be a cofinal subset in $([\theta]^\kappa, \subseteq)$, and $\J$ the restriction of the bounded ideal on $[\alpha]^\kappa$ to $D$. Then there is an $\J$-a.d.f. $\mathcal{G} \subseteq \kappa ^D$ such that $|\mathcal{G}| = \theta$.

            \begin{table}[H]
    \begin{center}
\begin{tabular}{c|cccc}
              & size        & dom & ran         & ideal \\ \hline
$\mathcal{G}$ & $\theta$ & $D$ & $\kappa$ & $\J$ 
\end{tabular}
\end{center}
\end{table}
\end{theo}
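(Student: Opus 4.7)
The plan is to code ordinals $\beta < \theta$ by their positions inside the elements of $D$. Concretely, I would first fix, for each $x \in D$, a bijection $e_x : \kappa \to x$ (legitimate since members of $[\theta]^\kappa$ have cardinality $\kappa$). Then for each $\beta < \theta$, define $g_\beta \in \kappa^D$ by
\[
g_\beta(x) = \begin{cases} e_x^{-1}(\beta) & \text{if } \beta \in x,\\ 0 & \text{otherwise,}\end{cases}
\]
and take $\mathcal{G} = \{g_\beta : \beta < \theta\}$.

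The key observation is that if $\{\beta, \beta'\} \subseteq x$ for some $x \in D$, then injectivity of $e_x$ forces $g_\beta(x) = e_x^{-1}(\beta) \neq e_x^{-1}(\beta') = g_{\beta'}(x)$. Therefore, for distinct $\beta, \beta' < \theta$, the agreement set $A = \{x \in D : g_\beta(x) = g_{\beta'}(x)\}$ is contained in $\{x \in D : \{\beta, \beta'\} \not\subseteq x\}$. Picking any $z \in [\theta]^\kappa$ with $\{\beta, \beta'\} \subseteq z$ (which exists since $\theta \geq \kappa$), $z$ witnesses that this larger set, and hence $A$, belongs to $\J$. The same observation also forces $|\mathcal{G}| = \theta$: by cofinality of $D$ one can find $x \in D$ containing $\{\beta, \beta'\}$, and on this $x$ the values $g_\beta(x)$ and $g_{\beta'}(x)$ differ, so the map $\beta \mapsto g_\beta$ is injective.

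There is no real obstacle here; the whole construction becomes routine once one notices that the bounded ideal is insensitive to conditions about the presence of fixed finite subsets, while the ``position function'' $e_x^{-1}$ cleanly separates distinct ordinals whenever both lie in $x$. The one minor subtlety is the choice of fallback value when $\beta \notin x$: it does not matter that $g_\beta$ and $g_{\beta'}$ may coincide on such $x$, because all those $x$ already sit inside the set containing neither $\beta$ nor $\beta'$, which is absorbed into $\J$ by the same witness $z$.
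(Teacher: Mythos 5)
Your proposal is correct and is essentially the paper's own proof: the paper likewise fixes an injection $\pi_x : x \to \kappa$ for each $x \in D$, sets $g_\alpha(x) = \pi_x(\alpha)$ when $\alpha \in x$ and $0$ otherwise, and observes that the agreement set of $g_\alpha$ and $g_\beta$ is contained in $\lkakko x \in D \mid \lkakko \alpha, \beta \rkakko \nsubseteq x \rkakko \in \J$. Your added remark verifying injectivity of $\beta \mapsto g_\beta$ is a harmless (and correct) supplement that the paper leaves implicit.
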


\begin{proof}
    For $x \in D$, let $\pi_x$ be an injection from $x$ to $\kappa$. For $\alpha< \theta$, define $g_\alpha \in \kappa^D$ by 
    \[ g_\alpha(x) = \begin{cases}
        \pi_x(\alpha) & \text{if $\alpha \in x$,} \\
        0 & \text{otherwise.}
    \end{cases} \]
    It suffices to show that if $\alpha, \beta < \theta$ and $\alpha \neq \beta$, then $g_\alpha$ and $g_\beta$ are $\J$-a.d. Take such $\alpha, \beta < \theta$. Then if $x \in D$ satisfies $x\supseteq \lkakko \alpha, \beta
    \rkakko$, then $g_\alpha(x) = \pi_x(\alpha) \neq \pi_x(\beta) = g_\beta(x)$. Hence
    \[ \lkakko x \in D \mid g_\alpha(x) = g_\beta(x) \rkakko \subseteq \lkakko x \in D \mid \lkakko \alpha, \beta \rkakko \nsubseteq x \rkakko \in \J. \]
    This completes the proof.
    \end{proof}

Applying \Cref{prop:combine adfs} to the two families of \Cref{theo:existence of large adf,theo:adf filling gap}, we get the following corollary:
\begin{cor}\label{cor:main adf}
    Let $\nu < \mu$ be a cardinal with $\kappa < \nu$. Suppose that 
    \begin{itemize}
        \item $2^\kappa < \mu^{+\kappa^+}$,
        \item $\cf{[\alpha]^\kappa, \subseteq} <\mu$ for all $\alpha < \mu$, and
        \item $2^\kappa < 2^\nu$.
    \end{itemize}
    Then for any $\alpha \in [\nu, \mu)$ and regular cardinal $\lambda \in (2^\kappa, 2^{\alpha}]$, there is a $\kappa^+$-complete ideal $\K$ on $S$ for some set $S$ with $|S| < \mu$, and a $\K$-a.d.f. $\mathcal{H} \subseteq \kappa ^S$ such that $|\mathcal{H}| = \lambda$.
\end{cor}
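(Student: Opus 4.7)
The plan is to combine the a.d.f.s produced by \Cref{theo:existence of large adf,theo:adf filling gap} via \Cref{prop:combine adfs}; almost no additional work is needed beyond keeping track of which cardinal plays which role.

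First I apply \Cref{theo:existence of large adf} to the given $\alpha$ and $\lambda$. Its three hypotheses all hold: $2^\kappa < \mu^{+\kappa^+}$ is assumed; $\cf{[\alpha]^\kappa,\subseteq}<\mu$ follows from the second bullet of the corollary since $\alpha < \mu$; and $2^\kappa < 2^\alpha$ follows from $\alpha \geq \nu$ together with the third bullet $2^\kappa < 2^\nu$. Choosing $C \subseteq [\alpha]^\kappa$ cofinal with $|C| = \cf{[\alpha]^\kappa,\subseteq} < \mu$, the theorem produces an ordinal $\delta < \mu$ and an $\I$-a.d.f.\ $\mathcal{F} \subseteq \delta^C$ of size $\lambda$, where $\I$ is the restriction of the bounded ideal on $[\alpha]^\kappa$ to $C$.

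Next I fix any cardinal $\theta$ with $\max(|\delta|,\kappa^+) \leq \theta < \mu$ and a cofinal $D \subseteq [\theta]^\kappa$ of cardinality $\cf{[\theta]^\kappa,\subseteq} < \mu$ (again by the second bullet applied to $\theta$). \Cref{theo:adf filling gap} then yields a $\J$-a.d.f.\ $\mathcal{G} \subseteq \kappa^D$ of size $\theta$, where $\J$ is the restriction of the bounded ideal on $[\theta]^\kappa$ to $D$. Identifying $\delta$ with $|\delta|$ via a bijection lets me view $\mathcal{F}$ inside $|\delta|^C$, so \Cref{prop:combine adfs} applies with parameters $\lambda_0=\lambda$, $\lambda_1=|\delta|$, $\lambda_2=\theta$, $\lambda_3=\kappa$ (the hypothesis $|\delta|\leq\theta$ holding by our choice). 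This produces an $\I\times\J$-a.d.f.\ $\mathcal{H}\subseteq \kappa^{C\times D}$ with $|\mathcal{H}|=\lambda$.

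It remains to confirm the stated properties with $S := C\times D$ and $\K := \I\times\J$. Clearly $|S| = |C|\cdot|D| < \mu$. Since $\alpha \geq \nu > \kappa$ forces $\alpha \geq \kappa^+$, and $\theta \geq \kappa^+$ by choice, \Cref{lem:completenes of the bounded ideal} gives that $\I$ and $\J$ are both $\kappa^+$-complete; then \Cref{lem:transitivity of completeness} (with both completeness parameters equal to $\kappa^+$) shows $\K$ is $\kappa^+$-complete. The only real subtlety is the cardinal bookkeeping for $\theta$: it must be chosen at least $|\delta|$ so that \Cref{prop:combine adfs} applies, and at least $\kappa^+$ so that \Cref{lem:completenes of the bounded ideal} applies—both arrangeable since $|\delta|,\kappa^+ < \mu$.
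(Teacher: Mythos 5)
Your proposal is correct and follows essentially the same route as the paper: apply \Cref{theo:existence of large adf} to get $\mathcal{F}$, pick $\theta\in[\delta,\mu)$ and apply \Cref{theo:adf filling gap} to get $\mathcal{G}$, then combine via \Cref{prop:combine adfs} and verify $\kappa^+$-completeness of $\I\times\J$ using \Cref{lem:completenes of the bounded ideal,lem:transitivity of completeness}. Your extra bookkeeping (requiring $\theta\geq\max(|\delta|,\kappa^+)$ and identifying $\delta$ with $|\delta|$) only makes explicit what the paper leaves implicit.
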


\begin{proof}
    By \Cref{theo:existence of large adf}, there is an $\I$-a.d.f. $\F \subseteq \delta^C$, where
    \begin{itemize}
        \item $C$ is cofinal subset in $([\alpha]^\kappa , \subseteq)$ with $|C| < \mu$,
        \item $\I$ is the restriction of the bounded ideal on $[\alpha]^\kappa$ to $C$,
        \item $\delta< \mu$, and
        \item $|\F|=\lambda$.
    \end{itemize}
    Take a cardinal $\theta \in [\delta, \mu)$. By \Cref{theo:adf filling gap}, there is a $\J$-a.d.f. $\mathcal{G }\subseteq \kappa^D$, where 
    \begin{itemize}
        \item $D$ is cofinal subset in $([\theta]^\kappa , \subseteq)$ with $|D| < \mu$,
        \item $\J$ is the restriction of the bounded ideal on $[\theta]^\kappa$ to $D$, and
        \item $|\mathcal{G}|=\theta$.
    \end{itemize}
    By applying \Cref{prop:combine adfs}, we obtain an $\I \times \J$-a.d.f. $\mathcal{H} \subseteq \kappa^{C\times D}$ such that $|\mathcal{H}| = \lambda$. Define $S = C \times D$ and $\K = \I \times \J$. By \Cref{lem:completenes of the bounded ideal,lem:transitivity of completeness}, $\I$, $\J$, and $\K$ are $\kappa^+$-complete. Also, $C$ and $D$ have size less than $\mu$. Then $S$ also has size less than $\mu$.

\begin{table}[H]
\begin{tabular}{c|cccc|c}
              & size        & dom & ran         & ideal &  \\ \hline
$\mathcal{F}$ & $\lambda ( \in (2^\kappa, 2^\alpha])$ & $C$ & $\delta(<\mu)$ & $\I$ & By \Cref{theo:existence of large adf}  \\
$\mathcal{G}$ & $\theta ( \geq \delta)$ & $D$ & $\kappa$ & $\J$  & By \Cref{theo:adf filling gap}  \\ \hline \hline
$\mathcal{H}$ & $\lambda$ & $C \times D$ & $\kappa$ & $\I \times\J$  & By \Cref{prop:combine adfs}
\end{tabular}
\end{table}
\end{proof}

By combining \Cref{cor:main adf,theo:computing dmu from adf}, we obtain the following.

\begin{theo}\label{theo:main dmu >= 2^mu}
    Suppose that 
    \begin{itemize}
        \item $2^\kappa < \mu^{+\kappa^+}$,
        \item $\cf{[\alpha]^\kappa, \subseteq} <\mu$ for all $\alpha < \mu$, and
        \item $2^\kappa < 2^\nu$ for some $\nu < \mu$.
    \end{itemize}
    Then $\dmu \geq 2^{<\mu}$.
\end{theo}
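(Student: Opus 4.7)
The plan is to directly chain \Cref{cor:main adf} and \Cref{theo:computing dmu from adf}. Given the three hypotheses, \Cref{cor:main adf} supplies, for each $\alpha \in [\nu, \mu)$ and each regular $\lambda \in (2^\kappa, 2^\alpha]$, a $\kappa^+$-complete ideal $\K$ on a set $S$ with $|S| < \mu$ and a $\K$-a.d.f. $\mathcal{H} \subseteq \kappa^S$ of size $\lambda$. To feed this into \Cref{theo:computing dmu from adf}, I only need to (a) check that $\K$ is proper, and (b) identify $S$ with an ordinal $\delta < \mu$.

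For (a), inspect the construction: $\K = \I \times \J$, where $\I$ is the restriction of the bounded ideal on $[\alpha]^\kappa$ to a cofinal $C \subseteq [\alpha]^\kappa$, and $\J$ the restriction of the bounded ideal on $[\theta]^\kappa$ to a cofinal $D$. Since $C$ (resp.\ $D$) is cofinal, no single $x \in [\alpha]^\kappa$ (resp.\ $[\theta]^\kappa$) can fail to be contained in some element of $C$ (resp.\ $D$), so $C \notin \I$ and $D \notin \J$. By the definition of the Fubini product, $S = C \times D \in \K$ would mean $\{c \in C \mid D \notin \J\} = C \in \I$, which is false; hence $\K$ is proper. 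For (b), fix a bijection $\sigma \colon \delta \to S$ where $\delta := |S| < \mu$, and transfer $\mathcal{H}$ and $\K$ through $\sigma$; the resulting ideal on $\delta$ remains $\kappa^+$-complete and proper, and the transferred family is still an a.d.f.\ of size $\lambda$. Then \Cref{theo:computing dmu from adf} gives $\dmu \geq \lambda$.

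To finish, I take the supremum. By the third hypothesis, $2^\nu > 2^\kappa$, so for every $\alpha \in [\nu, \mu)$ the interval $(2^\kappa, 2^\alpha]$ is nonempty. Successor cardinals in this interval are regular and cofinal in $2^\alpha$ (if $2^\alpha$ is itself a successor cardinal it is already regular, and one may take $\lambda = 2^\alpha$), so the previous paragraph yields $\dmu \geq 2^\alpha$ for every $\alpha \in [\nu, \mu)$. Since $2^\beta \leq 2^\nu$ for $\beta < \nu$, this gives
\[ \dmu \geq \sup_{\alpha < \mu} 2^\alpha = 2^{<\mu}, \]
as desired.

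There is no real obstacle here beyond bookkeeping: the substantive combinatorial work was done in \Cref{theo:existence of large adf} and \Cref{theo:adf filling gap}, packaged into \Cref{cor:main adf}, and the extraction of a lower bound on $\dmu$ from an a.d.f.\ was done in \Cref{theo:computing dmu from adf}. The only points deserving attention are the properness of the product ideal $\K$ and the sup-of-regulars argument, both of which are routine.
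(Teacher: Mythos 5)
Your proposal is correct and follows essentially the same route as the paper: chain \Cref{cor:main adf} with \Cref{theo:computing dmu from adf} to get $\dmu \geq \lambda$ for every regular $\lambda \in (2^\kappa, 2^\alpha]$, then split on whether $2^\alpha$ is regular or singular and take the supremum over $\alpha \in [\nu,\mu)$. Your explicit checks that $\K = \I\times\J$ is proper and that $S$ can be transferred to an ordinal $\delta < \mu$ are details the paper leaves implicit, and both are verified correctly.
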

\begin{proof}
    It suffices to show that for all $\alpha \in [\nu, \mu)$, $2^\alpha \leq \dmu$. For any $\alpha \in [\nu, \mu)$, by \Cref{cor:main adf,theo:computing dmu from adf}, we obtain that $\lambda \leq \dmu$ for each regular $\lambda \in (2^\kappa, 2^\alpha]$. If $2^\alpha$ is regular, the case $\lambda = 2^\alpha$ implies $2^\alpha \leq \dmu$ directly. Also, if $2^\alpha$ is singular, we obtain $\lambda \leq \dmu$ for unboundedly many $\lambda < 2^\alpha$. Therefore, in either case, we obtain that $2^\alpha \leq \dmu$ and this completes the proof.
\end{proof}

\section{The cofinality of $\cf{[\mu]^\kappa, \subseteq}$ and Cardinal invariants}
    
In this section we establish a lower bound for $\dmu$ provable in $\ZFC$ and investigate relations with other cardinal invariants at $\mu$.

\begin{theo}\label{theo:cfkappa^lambdasub<=dmu}
    $\cf{[\mu]^\kappa, \subseteq} \leq \dmu$.
\end{theo}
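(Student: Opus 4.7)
The plan is to construct, from a dominating family $\mathcal{D}\subseteq\kappa^\mu$ of size $\dmu$, a cofinal family $F\subseteq([\mu]^\kappa,\subseteq)$ of size at most $\dmu$.

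I would start by fixing a strictly increasing cofinal sequence $\langle \mu_i : i < \kappa \rangle$ of cardinals in $\mu$ with $\mu_0 > \kappa$; this serves as scaffolding for both the encoding of $\kappa$-subsets as functions and the extraction of covers from dominators. For each unbounded $x \in [\mu]^\kappa$, enumerated increasingly as $x = \{\beta^x_\xi : \xi < \kappa\}$, define $f_x \in \kappa^\mu$ by $f_x(\alpha) = \xi$ for $\alpha \in [\beta^x_\xi, \beta^x_{\xi+1})$; regularity of $\kappa = \cf{\mu}$ together with unboundedness of $x$ keeps the values strictly below $\kappa$. Bounded $x \in [\mu_i]^\kappa$ are separately covered by cofinal families in the smaller structures $[\mu_i]^\kappa$, with the contribution folded into the total count.

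Next, by the dominating property of $\mathcal{D}$, pick $d \in \mathcal{D}$ dominating $f_x$. The core step is to define a canonical recovery map $d \mapsto y_d \in [\mu]^{\leq \kappa}$ depending only on $d$ and the scaffolding, such that $x \subseteq y_d$ whenever $d$ dominates $f_x$. Then $F = \{y_d : d \in \mathcal{D}\}$ is a cofinal family in $[\mu]^\kappa$ of size at most $\dmu$, as required.

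The hard part will be the recovery step. The \emph{bad} set where $d \leq f_x$ has size less than $\mu$, but because $\mu$ is singular this set may be unbounded in $\mu$ and may contain all of $x$ (which has cardinality only $\kappa<\mu$); hence naive recoveries---the set of records of $d$, or thresholds $\min\{\alpha : d(\alpha) \geq \xi\}$---can miss $x$ entirely. The workaround is to exploit the scaffolding: any set of size less than $\mu$ has size at most $\mu_j$ for some $j$, so on each interval $[\mu_i,\mu_{i+1})$ with $i \geq j$ the bad set occupies a negligible portion, and on cofinally many such $i$ the domination $d > f_x$ faithfully reflects $f_x$ on $[\mu_i,\mu_{i+1})$. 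I would refine the encoding $f_x$ so that on each interval it locally records $x \cap [\mu_i,\mu_{i+1})$, and assemble $y_d$ piecewise from $d$ restricted to these intervals; this may require $\kappa$ auxiliary functions per $x$, giving a family of size $\dmu\cdot\kappa = \dmu$, absorbable since $\dmu \geq \mu^+$ (by a standard diagonal argument against any family of size $\leq \mu$).
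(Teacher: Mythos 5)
The recovery step---which you correctly flag as the hard part---is where the argument breaks down, and the proposed workaround does not repair it. With your coding $f_x(\alpha)=\xi$ for $\alpha\in[\beta^x_\xi,\beta^x_{\xi+1})$, a single dominator already dominates a family of $x$'s whose union is all of $\mu$: let $d(\alpha)=i+1$ for $\alpha\in[\mu_i,\mu_{i+1})$; then $d>f_x$ \emph{everywhere} for every unbounded $x$ whose $\xi$-th element is at least $\mu_\xi$, and the union of these $x$'s covers $[\mu_0,\mu)$. (Moreover, anything dominating this $d$ dominates all these $f_x$ as well, so every dominating family contains such a $d$.) Hence no assignment $d\mapsto y_d$ with $|y_d|\le\kappa$---nor $d\mapsto\{y_{d,j} : j<\kappa\}$---can satisfy $x\subseteq y_d$ for all dominated $x$. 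The suggested refinement, having $f_x$ ``locally record $x\cap[\mu_i,\mu_{i+1})$,'' cannot work either: $f_x$ takes values below $\kappa$, while there are more than $\kappa$ candidates for $x\cap[\mu_i,\mu_{i+1})$ once $\mu_{i+1}>\kappa$; and even if a value could be recorded, domination only hands you the upper bound $d(\alpha)>f_x(\alpha)$, not the value itself, so you would still need the set of all codes below $d(\alpha)$ to decode into a $\le\kappa$-sized superset of $x$. (A smaller point: treating bounded $x$ separately is both unnecessary---every bounded member of $[\mu]^\kappa$ extends to an unbounded one---and problematic, since $\cf{[\mu_i]^\kappa, \subseteq}\le\dmu$ is not available in advance.)

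The paper's proof avoids direct recovery entirely, and the device it uses is exactly what your coding lacks. It sets $f_x(\gamma)=\otp{\ell_{\alpha_\gamma}\cap\sup(x\cap\alpha_\gamma)}$, using ladders $\ell_\alpha$ at points $\alpha$ of cofinality $\kappa$ and a pairing bijection $\pi:\mu\to\mu\times\mu$ in which every pair occurs $\mu$ many times; this redundancy is what neutralizes the unbounded bad set of size $<\mu$ that worries you. It then argues by dichotomy: either each set $\bigcup C_{d,\gamma}$ (the union of all $x$ in a fixed cofinal $C$ with $f_x\le d$ from $\gamma$ on) has size $\le\kappa$, in which case these unions form a cofinal family of size $\dmu$; or some such union has size $>\kappa$, in which case $\kappa$ many of its members with strictly increasing suprema below a minimal $\delta_0$ converge to a point $\alpha$ of cofinality $\kappa$, and the order types along $\ell_\alpha$ force the values $f_{x_\xi}(\gamma')$ at a single coordinate $\gamma'$ (coding the pair $(\alpha,\beta)$ beyond $\gamma$) to be unbounded in $\kappa$, contradicting $f_{x_\xi}(\gamma')\le d(\gamma')$. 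The essential content is thus a mechanism making $\kappa^+$ many dominated sets incompatible with one dominator; your interval coding has no analogue of it, so the gap is structural rather than a matter of missing details.
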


\begin{proof}
    Let $E^\mu_\kappa = \lkakko \alpha
     < \mu \mid \cf{\alpha} = \kappa \rkakko$. For $\alpha \in E^\mu _{\kappa}$, take $\ell_\alpha\subseteq \alpha$ be a ladder, that is, $\otp{\ell_\alpha} = \kappa$ and $\ell_\alpha$ is cofinal in $\alpha$. Let $\pi : \mu \rightarrow \mu\times\mu$ be a bijection such that every element of $\mu\times \mu$ appears $\mu$ many times, and let $\alpha_\gamma$ (resp. $\beta_\gamma$) denote the 1st (resp. 2nd) coordinate of $\pi(\gamma)$.

    Let $C \subseteq [\mu]^\kappa$ be a cofinal subset of $[\mu]^\kappa$ with $|C|=\cf{[\mu]^\kappa, \subseteq}$. For $x \in C$, we define the function $f_x \in \kappa^\mu$ as follows:
    \[ f_x(\gamma) =\begin{cases}
        \otp{ \ell_{\alpha_\gamma} \cap \sup(x\cap \alpha_\gamma)} &\text{if $\alpha_\gamma \in E^\mu _\kappa$ and $\sup(x\cap \alpha_\gamma) < \alpha_\gamma$,} \\
        0 & \text{otherwise.}
    \end{cases} \]

    Let $\mathcal{D}\subseteq \kappa^\mu$ be a dominating family with $|\mathcal{D}|=\dmu$. Suppose, for contradiction, that $|\mathcal{D}| =\dmu < \cf{[\mu]^\kappa, \subseteq} =|C|$. We define $C_{d, \gamma}$ by $\{ x \in C \mid \forall \gamma' \geq \gamma [ f_x (\gamma') \leq d(\gamma') ] \}$.

    Here, if $|\bigcup C_{d, \gamma}| \leq \kappa$ holds for each $d \in \mathcal{D}$ and $\gamma < \mu$, $\lkakko \bigcup C_{d, \gamma} \mid d \in \mathcal{D}, \gamma < \mu\rkakko$ is a cofinal subset in $[\mu]^\kappa$ of size $\dmu$ and this contradicts the assumption $\dmu < \cf{[\mu]^\kappa, \subseteq}$. Therefore, there are $d\in \mathcal{D}$ and $\gamma < \mu$ such that $|\bigcup C_{d, \gamma}| > \kappa$.
    
    We pick such $d\in \mathcal{D}$ and $\gamma < \mu$. Let $\delta_0 = \min \lkakko \delta < \mu \mid \otp{\delta \cap \bigcup C_{d, \gamma}} = \kappa^+ \rkakko$. Recursively, we take $\left\langle x_\xi \in C_{d, \gamma} \mid \xi < \kappa\right\rangle$ satisfying $\sup(x_\xi \cap \delta_0) < \sup(x_\zeta \cap \delta_0)$ for all $\xi < \zeta < \kappa$. Now let $\alpha = \sup\{ \sup (x_\xi \cap \delta_0) \mid \xi < \kappa \}$. By the definition of $\pi$, we can find $\beta < \mu$ such that $\gamma' = \pi^{-1}(\alpha, \beta) \geq \gamma$. Thus $\lkakko f_{x_\xi}(\gamma') \mid \xi < \kappa \rkakko$ is unbounded in $\kappa$. However, $f_{x_\xi}(\gamma') \leq d(\gamma')$ for all $\xi < \kappa$. This is a contradiction.
\end{proof}

Note that it is easy to see that $\cf{[\mu]^\kappa, \subseteq} < \dmu$ is consistent since any $\kappa^+$-cc forcing preserves $\cf{[\mu]^\kappa, \subseteq}$. More concretely, starting with a model of $\GCH$, the forcing which adds $\mu^{++}$-many $\kappa$-Cohen reals forces $\dmu = \mu^{++} > \mu^+ = \cf{[\mu]^\kappa, \subseteq}$. In order to this, it suffices to show that $\kappa$-Cohen forcing adds an $V$-unbounded function of $\kappa^\mu$. Let $\langle \mu_i \mid i < \kappa \rangle$ be a continuous increasing sequence of $\mu$ with $\mu_0 = 0$, and $c \in \kappa^\kappa$ be a $\kappa$-Cohen real. Let us define $f\in \kappa^\mu$ by $f(\alpha) = c(i)$ if and only if $\alpha \in [\mu_i, \mu_{i +1} )$. Then such $f$ is a $V$-unbounded in $\kappa^\mu$.

Next, we apply this theorem to show the analogy of $\mathfrak{s} \leq \domega$. First, we introduce the generalization of $\mathfrak{s}$.

\begin{defi}
    Let $\lambda$ be a cardinal.
\begin{itemize}
    \item For $A, B \in [\lambda]^\lambda$, We say that $A$ \textit{splits} $B$ if $\left|B \cap A\right| = \left| B\setminus A \right| = \lambda$.
    \item For $\mathcal{S}\subseteq [\lambda]^\lambda$, we call $\mathcal{S}$ \textit{a splitting family} if for any $B \in [\lambda]^\lambda$, there is some $A \in \mathcal{S}$ such that $A$ splits $B$.
    \item We define 
        \begin{align*}
            \smu &= \min \lkakko \left| \mathcal{S} \right| \mid \mathcal{S} \text{ is a splitting family.} \rkakko
        \end{align*}
\end{itemize}
\end{defi}

In \cite{zapletal1997splitting}, Zapletal proved the following.

\begin{fac}\label{fac:zapletal fact}
    $\salephomega \leq \cf{[\aleph_\omega]^\omega, \subseteq}$.
\end{fac}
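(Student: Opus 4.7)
The plan is to turn a cofinal family $C \subseteq [\aleph_\omega]^\omega$ of size $\cf{[\aleph_\omega]^\omega, \subseteq}$ into a splitting family of the same cardinality. After closing $C$ under union with a fixed cofinal $\omega$-sequence in $\aleph_\omega$, every $x \in C$ is cofinal in $\aleph_\omega$ of order type $\omega$. Enumerate $x$ in increasing order as $\{x(n) : n < \omega\}$ and set $A_x := \bigcup_n [x(2n), x(2n+1))$. The candidate splitting family is $\mathcal{S} := \{A_x : x \in C\}$, of cardinality at most $|C|$.

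To show $\mathcal{S}$ is splitting, fix $B \in [\aleph_\omega]^{\aleph_\omega}$. I would attach to $B$ a countable ``witness'' $y_B \subseteq \aleph_\omega$ as follows. Using that $|B \cap [\alpha, \aleph_\omega)| = \aleph_\omega$ for every $\alpha < \aleph_\omega$, recursively pick for each $n < \omega$ a finite block of partition points $p^n_0 < p^n_1 < \cdots < p^n_{k_n}$ lying above the previous block so that each sub-interval $[p^n_i, p^n_{i+1})$ contains at least $\aleph_n$ elements of $B$; here $k_n$ is chosen to grow sufficiently quickly (for instance $k_n = 2n+3$). By cofinality of $C$, pick $x \in C$ with $y_B \subseteq x$; the target is to show $A_x$ splits $B$.

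The main obstacle is that $x$ may contain additional points between consecutive elements of $y_B$, so the $y_B$-sub-intervals need not be single $x$-sub-intervals and the parity of $x$-subinterval indices can shift globally as points are inserted. In the easy case $x = y_B$, the $y_B$-sub-intervals are consecutive in $x$'s enumeration and alternate between even and odd indices, so block $n$ contributes at least $\aleph_n$ to each of $A_x \cap B$ and $\aleph_\omega \setminus A_x \cap B$, and summing yields $|A_x \cap B| = |B \setminus A_x| = \aleph_\omega$. In the general case, the plan is to let $k_n$ be large enough that a pigeonhole over the $k_n$ many $y_B$-sub-intervals of block $n$ still forces both parity classes of $x$-subintervals to receive at least one whose concentrated $B$-contribution of $\aleph_n$ elements lands there. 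If even that pigeonhole fails because $B$'s contribution to a $y_B$-sub-interval concentrates in a single $x$-sub-interval of the wrong parity, one enlarges $\mathcal{S}$ by adjoining $\omega$-many parity-shifted splitters per $x$, which keeps $|\mathcal{S}| = |C| \cdot \aleph_0 = |C|$. The delicate combinatorial bookkeeping that makes one of these variants go through is exactly the content of Zapletal's argument in \cite{zapletal1997splitting}.
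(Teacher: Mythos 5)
Your reduction to a cofinal family is the right first move, but the decisive combinatorial step is missing, and the scheme you set up cannot be completed as described. The set $A_x=\bigcup_n[x(2n),x(2n+1))$ labels the blocks of $x$ by their position in the increasing enumeration, and that label is exactly what is destroyed when you pass from the witness $y_B$ to an arbitrary $x\in C$ with $y_B\subseteq x$. Neither of your proposed repairs closes this. The pigeonhole over the $k_n$ subintervals of block $n$ fails because, for every $n$ and every $i$ independently, the $\aleph_n$-sized trace of $B$ on $[p^n_i,p^n_{i+1})$ can concentrate in a single $x$-subinterval of even index; nothing ever forces the odd class to receive a large trace. Adjoining enumeration-shifted variants of $A_x$ does not help either: shifting by one flips every parity simultaneously and shifting by two changes nothing, so the countably many shifts yield only $A_x$ and (up to a bounded set) its complement, and the failure ``$|B\setminus A_x|<\aleph_\omega$'' merely becomes ``$|B\cap A_x|<\aleph_\omega$''. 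Flipping parities independently in each block would require $2^{\aleph_0}$ variants per $x$, and $2^{\aleph_0}\le\cf{[\aleph_\omega]^\omega,\subseteq}$ is not a theorem of $\ZFC$. Your final sentence concedes that the bookkeeping which would make some variant work is being outsourced to the reference, so as a proof this is incomplete. (A smaller problem: a countable cofinal subset of $\aleph_\omega$ need not have order type $\omega$, so the enumeration $\{x(n)\mid n<\omega\}$ is not defined for a general $x\in C$.)

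The paper's proof of the general statement $\smu\leq\cf{[\mu]^\kappa,\subseteq}$ avoids the parity problem by labelling the relevant intervals with an invariant of their right endpoint rather than with an enumeration index. For $x\in C$ and $\alpha\in x$ with $\cf{\alpha}>\kappa$, the set $x\cap\alpha$ is bounded in $\alpha$, so there is a gap $(\beta^x_\alpha,\alpha)$ disjoint from $x$ which is a final segment of $\alpha$; the splitter $B_x$ collects exactly those gaps whose right endpoint has cofinality $\mu_{2i}$ for some $i$, where $\langle\mu_i\mid i<\kappa\rangle$ is a fixed sequence of regulars cofinal in $\mu$. Given an unbounded $A$ with increasing enumeration $\langle\gamma_\alpha\mid\alpha<\mu\rangle$, the ordinals $\delta_i=\sup\{\gamma_\alpha\mid\alpha<\mu_i\}$ satisfy $\cf{\delta_i}=\mu_i$ and $A\cap\delta_i$ is cofinal in $\delta_i$, so for \emph{any} $x\in C$ with $x\supseteq\{\delta_i\mid i<\kappa\}$ the gap below $\delta_i$ automatically captures $\mu_i$ many points of $A$, and whether that gap goes into $B_x$ or into its complement is decided by $\cf{\delta_i}$, which depends only on $A$ and not on which extra points $x$ happens to contain. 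That stability under enlarging $x$ is precisely what your enumeration-based labelling lacks.
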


Below, we show that the extension of \Cref{fac:zapletal fact} to arbitrary singular cardinals is proved via a simple argument of constructing an appropriate Galois-Tukey map.
    
\begin{theo}\label{theo:smu <= cf(mu^kappa)}
    $\smu \leq \cf{[\mu]^\kappa, \subseteq}$.
\end{theo}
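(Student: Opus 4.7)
The plan is to exhibit a Galois--Tukey morphism witnessing the inequality: maps $\phi_-\colon [\mu]^\mu\to[\mu]^\kappa$ and $\phi_+\colon [\mu]^\kappa\to[\mu]^\mu$ such that $\phi_-(B)\subseteq x$ implies $\phi_+(x)$ splits $B$. Given any cofinal $C\subseteq[\mu]^\kappa$ of size $\cf{[\mu]^\kappa,\subseteq}$, the image $\{\phi_+(x):x\in C\}$ is then a splitting family of the same cardinality.

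I would first fix a continuous increasing cofinal sequence $\langle\mu_i:i<\kappa\rangle$ in $\mu$ with $\mu_0=0$. For each cofinal $x\in[\mu]^\kappa$, enumerating $x=\{\alpha_\xi^x:\xi<\kappa\}$ increasingly, I set
\[
\phi_+(x)=A_x := \bigcup_{\xi<\kappa}\bigl[\alpha_{2\xi}^x,\alpha_{2\xi+1}^x\bigr),
\]
the union of the ``even-indexed'' gaps of $x$; this gives $|A_x|=|\mu\setminus A_x|=\mu$. For $x$ not cofinal, $A_x$ can be any fixed default $\mu$-sized subset of $\mu$.

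For $\phi_-(B)$ I would treat two cases. If $\mu\setminus B$ is bounded in $\mu$, take $\phi_-(B)$ to be any cofinal $\kappa$-sequence above the bound; any cofinal $x\supseteq\phi_-(B)$ then makes $A_x$ split $B$ trivially, since $B$ is eventually cofinite and both $A_x,\mu\setminus A_x$ are cofinal of size $\mu$. In the main case where both $B$ and $\mu\setminus B$ are cofinal in $\mu$, I would recursively pick $b_i\in B$ and $c_i\in\mu\setminus B$ with $b_i<c_i<b_{i+1}$, cofinal in $\mu$, satisfying both $|B\cap[b_i,c_i)|\geq\mu_i$ and $|B\cap[c_i,b_{i+1})|\geq\mu_i$; this is possible because $B$ and $\mu\setminus B$ are both cofinal. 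Set $\phi_-(B)=\{b_i,c_i:i<\kappa\}$.

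The main verification is that for $x\supseteq\phi_-(B)$ cofinal in $\mu$, $A_x$ splits $B$. The core point is that when no extras of $x$ lie between $b_i$ and $c_i$ (and between $c_i$ and $b_{i+1}$), the two intervals $[b_i,c_i)$ and $[c_i,b_{i+1})$ are consecutive sub-gaps of $x$ whose enumeration indices differ by one, so exactly one of them lies in $A_x$ and the other in $\mu\setminus A_x$; each side thereby absorbs $\mu_i$ many $B$-elements at level $i$, and summing over cofinally many $i<\kappa$ yields $|A_x\cap B|=|B\setminus A_x|=\mu$.

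The main obstacle is the parity shift caused by ``extra'' elements of $x$ inside the gaps $(b_i,c_i)$ and $(c_i,b_{i+1})$: inserting extras can splinter a $B$-heavy interval into sub-intervals of the same parity, potentially dumping its $B$-mass onto one side of the split, and for $\kappa>\omega$ limit ordinals in the enumeration of $x$ introduce further parity pathologies. Handling this requires refining $\phi_-(B)$ by inserting, within each $[b_i,c_i)$ and $[c_i,b_{i+1})$, further alternating $B$- and $(\mu\setminus B)$-anchors (together with parity witnesses from the cofinal sequence $\langle\mu_i\rangle$) designed so that the $B$-mass is forced to straddle sub-intervals of both parities regardless of $x$'s extras; the two-sided mass condition $\geq\mu_i$ guarantees these refinements still fit into a $\kappa$-sized set. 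The final verification then proceeds level by level and is precisely the analog of the ``$f\in\mathcal D$ dominates the gap function of $B$'' step in the classical proof of $\mathfrak{s}\leq\domega$.
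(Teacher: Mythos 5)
Your overall frame --- fix a cofinal $C \subseteq [\mu]^\kappa$ of size $\cf{[\mu]^\kappa,\subseteq}$, turn each $x \in C$ into a candidate splitter, and for each $B \in [\mu]^\mu$ produce an anchor set $\phi_-(B) \in [\mu]^\kappa$ so that $\phi_-(B) \subseteq x$ forces the splitter to split $B$ --- is exactly the paper's. But your definition of $A_x$ from the \emph{parity of the increasing enumeration} of $x$ creates a gap that you correctly flag and then do not close, and I do not believe it can be closed by adding anchors. The parity of a sub-gap of $x$ is determined by the global enumeration of $x$, not by $\phi_-(B)$, and an adversarial superset defeats any anchor set: given any candidate $\phi_-(B) = \{e_\eta : \eta < \kappa\}$ listed increasingly, build $x \supseteq \phi_-(B)$ by inserting into each gap $[e_\eta, e_{\eta+1})$ that is about to receive even parity the single extra point $d_\eta = \min\left(B \cap [e_\eta, e_{\eta+1})\right) + 1$ (when that set is nonempty). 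Every even sub-gap of the resulting $x$ then contains at most one point of $B$, so $|A_x \cap B| \leq \kappa < \mu$ and $A_x$ fails to split $B$. Since the adversary reacts to the entire anchor set, ``further alternating anchors'' cannot help; and for $\kappa > \omega$ the limit stages of the enumeration make the parity bookkeeping degenerate anyway, as you note. So the last third of your argument is not a proof but a description of the obstacle.

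The paper's proof removes parity from the picture by labelling gaps with data intrinsic to their right endpoints. Fix a continuous increasing cofinal sequence $\langle \mu_i \mid i < \kappa\rangle$ of regular cardinals. For $x \in C$ and each $\alpha \in x$ with $\cf{\alpha} > \kappa$, the set $x \cap \alpha$ is bounded in $\alpha$, giving a gap $\left(\beta^x_\alpha, \alpha\right)$ of $x$ just below $\alpha$; this gap is put into the splitter $B_x$ if and only if $\cf{\alpha} = \mu_{2i}$ for some $i$. Membership of a gap in $B_x$ thus depends only on $\cf{\alpha}$, not on how $x$ is enumerated. On the other side, for $B$ with increasing enumeration $\langle \gamma_\alpha \mid \alpha<\mu\rangle$ the anchors are $\delta_i = \sup\{\gamma_\alpha \mid \alpha < \mu_i\}$: since $\cf{\delta_i} = \mu_i$ and $B \cap \delta_i$ is cofinal in $\delta_i$, \emph{every} final segment of $\delta_i$ --- in particular the gap $\left(\beta^x_{\delta_i}, \delta_i\right)$ for any $x \ni \delta_i$, wherever its left endpoint falls --- meets $B$ in a set of size $\mu_i$. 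Summing over even and odd $i$ gives $|B \cap B_x| = |B \setminus B_x| = \mu$. If you want to salvage your write-up, replace the enumeration parity by this cofinality labelling; your Galois--Tukey scaffolding then goes through essentially verbatim.
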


\begin{proof}
    Take a cofinal continuous increasing sequence $\langle \mu_i\mid i< \kappa \rangle$ in $\mu$ such that $\mu_i$ is regular for all $i < \kappa$. Let $C \subseteq [\mu]^\kappa$ be a cofinal subset in $([\mu]^\kappa, \subseteq)$ with $|C|= \cf{[\mu]^\kappa, \subseteq}$. For each $x \in C$, define $A_x=\lkakko \alpha \in x \mid \cf{\alpha} > \kappa\rkakko$. Then for each $\alpha \in A_x$, $\alpha \cap x$ is bounded in $\alpha$ and hence we can take some $\beta^x_\alpha < \alpha$ such that $\left( \beta^x_\alpha, \alpha \right)\cap x = \emptyset$. Note that $\lkakko \left( \beta^x_\alpha, \alpha \right) \relmiddle | \alpha \in A_x \rkakko$ is a pairwise disjoint family. We define $B_x \in \left[ \mu \right]^{\leq \mu}$ by 
    \[ B_x = \bigcup \lkakko \left( \beta^x_\alpha, \alpha \right) \relmiddle | \alpha \in A_x \land  \exists i < \kappa \left(\cf{\alpha} = \mu_{2i} \right) \rkakko. \]
    It suffices to show that $\lkakko B_x \mid x \in C \rkakko \cap [\mu]^\mu$ is a splitting family. 

    Let $A \in [\mu]^\mu$ and $\lkakko \gamma_\alpha \mid \alpha < \mu \rkakko$ be the increasing enumeration of $A$. For each $i < \kappa$, define $\delta_i = \sup\{\gamma_\alpha \mid \alpha < \mu_i \}$. Since $C$ is cofinal in $([\mu]^\kappa, \subseteq)$, there is some $x \in C$ such that $x\supseteq \lkakko \delta_i \mid i < \kappa \rkakko$. For each $i < \kappa$, $\cf{\delta_i} = \mu_i$ and $A\cap \delta_i$ is cofinal in $\delta_i$. Hence, $\left| A\cap  \left( \beta^x_{\delta_i}, \delta_i \right) \right| = \mu_i$. Since $\lkakko \left( \beta^x_\alpha, \alpha \right) \relmiddle | \alpha \in A_x \rkakko$ is pairwise disjoint,
            \begin{align*}
                \left| A\cap B_x \right| &\geq \left| \bigcup_{i < \kappa} A\cap \left( \beta ^x_{\delta_{2i}}, \delta_{2i} \right) \right| = \sum_{i < \kappa} \mu_{2i} = \mu \\
                \left| A\setminus B_x \right| &\geq \left| \bigcup_{i < \kappa} A\cap \left( \beta ^x_{\delta_{2i + 1}}, \delta_{2i + 1} \right) \right| = \sum_{i < \kappa} \mu_{2i + 1} = \mu.
            \end{align*}
    This completes the proof.
\end{proof}  

Then the following is straightforward.
  
\begin{cor}
    $\smu \leq \dmu$.
\end{cor}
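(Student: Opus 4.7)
The plan is simply to chain the two inequalities already established. By \Cref{theo:cfkappa^lambdasub<=dmu} we have $\cf{[\mu]^\kappa, \subseteq} \leq \dmu$, and by \Cref{theo:smu <= cf(mu^kappa)} we have $\smu \leq \cf{[\mu]^\kappa, \subseteq}$. Composing these two inequalities yields $\smu \leq \dmu$, which is exactly the claim.

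There is no real obstacle here: the entire content of the corollary is packaged into the two preceding theorems, and the role of the corollary is just to announce the combined conclusion. The one thing worth noting, for expository clarity, is that \Cref{theo:smu <= cf(mu^kappa)} is the Zapletal-style upper bound for the splitting number (which generalizes \Cref{fac:zapletal fact} from $\aleph_\omega$ to arbitrary singular $\mu$), while \Cref{theo:cfkappa^lambdasub<=dmu} is the $\ZFC$ lower bound for $\dmu$ proved via the ladder/enumeration trick. So the corollary can be viewed as the natural singular analogue of $\mathfrak{s} \leq \domega$, obtained by factoring through $\cf{[\mu]^\kappa, \subseteq}$ rather than by any direct interval-partition argument (which, as remarked in the introduction, is problematic at singular cardinals).

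Hence the proof is a one-liner: transitivity of $\leq$ applied to the two previously established bounds.
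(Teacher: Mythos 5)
Your proof is correct and is exactly the paper's argument: the corollary is obtained by chaining \Cref{theo:smu <= cf(mu^kappa)} with \Cref{theo:cfkappa^lambdasub<=dmu} via transitivity, which is why the paper states it as "straightforward" without further proof.
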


\section{Separability of $\dmu$ and $2^\mu$}
In this section, we discuss the separability of $\dmu$ and $2^\mu$. First, we prepare basic facts about forcing.

\begin{defi}
    Let $\qposet \subseteq \pposet$ be posets, and $p \in \pposet$. We call $q_p\in \qposet$ \textit{a reduction of $p$} if $q \bot p$ implies $q\bot p_q$ for all $q\in \qposet$.
\end{defi}

\begin{fac}\label{fac:several facts}
Let $\mathbb{P}$ be a complete Boolean algebra, and $\qposet$ be a complete Boolean subalgebra of $\pposet$. Define $\dot{\rposet} = \lkakko \langle \check{p}, q \rangle \mid p \text{ is a reduction of } q \rkakko$.
\begin{enumerate}
    \item For any $p \in \pposet$, $\tilde{p} = \inf\lkakko q \in \qposet\mid p\leq q \rkakko$ is a reduction of $p$.
    \item Let $D = \lkakko \langle q, \check{p} \rangle \mid q \Vdash p \in \dot{\R} \rkakko$ be a dense subset of $\qposet \ast \dot{\rposet}$. Define the map $k:D \rightarrow \pposet$ by $k(q, \check{p}) = q \land p$. Then $k$ is a dense embedding. Furthermore, since $\pposet$ is complete, its natural completion
    \[ \bar{k}: \bigvee_{i} \langle q_i, \check{p_i} \rangle \mapsto \bigvee_{i} \bar{k} (q_i, \check{p_i} )\]
    is a dense embedding from $\qposet \ast \dot{\rposet}$ to $\pposet$.
    \item If $\kappa$ is regular and $\pposet$ is $\kappa$-cc, then $\Vdash_{\qposet} \dot{\rposet} \text{ is $\kappa$-cc}$.
\end{enumerate}
\end{fac}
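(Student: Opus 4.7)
The Fact bundles three standard assertions about complete Boolean algebras with a complete Boolean subalgebra. My plan is to handle them in order, feeding (1) into (2) and (2) into (3). For (1), the argument is a one-line contrapositive. If $q \in \qposet$ satisfies $q \perp p$, then $p \leq \neg q$; because $\qposet$ is a complete Boolean subalgebra it is closed under Boolean complementation, so $\neg q \in \qposet$. Thus $\neg q$ is one of the $\qposet$-elements above $p$ whose infimum defines $\tilde p$, whence $\tilde p \leq \neg q$, i.e., $\tilde p \perp q$, which is the reduction property.

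For (2), I first check that $D$ is dense in $\qposet \ast \dot{\rposet}$. Given $(q, \dot r) \in \qposet \ast \dot{\rposet}$, the maximum principle in the complete Boolean algebra $\qposet$ produces some $q' \leq q$ and $p \in \pposet$ with $q' \Vdash \dot r = \check p$, so that $(q', \check p) \in D$ and $(q', \check p) \leq (q, \dot r)$. Next I verify the dense-embedding properties of $k$. Order preservation is immediate: if $(q_1, \check p_1) \leq (q_2, \check p_2)$, then $q_1 \leq q_2$ and $p_1 \leq p_2$ (by absoluteness of check names), so $q_1 \wedge p_1 \leq q_2 \wedge p_2$. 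Density of the image uses part (1): for any $p \in \pposet$, the reduction $\tilde p$ gives $(\tilde p, \check p) \in D$, and $k(\tilde p, \check p) = \tilde p \wedge p = p$ since $p \leq \tilde p$. The substantive step is incompatibility preservation: if $(q_1, \check p_1), (q_2, \check p_2) \in D$ are incompatible in $\qposet \ast \dot{\rposet}$ but $r := q_1 \wedge p_1 \wedge q_2 \wedge p_2 > 0$ in $\pposet$, then forming $\tilde r \in \qposet$ as in (1) one checks $(\tilde r, \check r) \in D$ and that $(\tilde r, \check r)$ extends each $(q_i, \check p_i)$ --- a contradiction. Extending $k$ to $\bar k$ on all of $\qposet \ast \dot{\rposet}$ is routine because completeness of $\pposet$ supplies the required suprema, and the dense-embedding properties transfer.

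For (3), suppose toward contradiction that some $q \in \qposet$ forces $\{\dot r_\alpha \mid \alpha < \kappa\}$ to be a $\kappa$-sized antichain in $\dot{\rposet}$. Then $\{(q, \dot r_\alpha) \mid \alpha < \kappa\}$ is an antichain of size $\kappa$ in $\qposet \ast \dot{\rposet}$, and $\bar k$ from (2) sends it to a $\kappa$-sized antichain in $\pposet$, contradicting $\kappa$-cc of $\pposet$. The main obstacle throughout is the incompatibility-preservation step in (2): the remaining items are essentially bookkeeping with names, but there one genuinely uses part (1) to pull a nonzero meet in $\pposet$ back into $\qposet$ and manufacture an explicit common extension.
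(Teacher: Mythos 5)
The paper states this Fact without proof (it is a standard result from the theory of quotient forcing), so there is no in-paper argument to compare with; your proof is correct and is the standard one. In particular, the key steps --- the complement trick for (1), using $\tilde{p}$ for density of the image and the pull-back $\tilde{r}$ of a nonzero meet for incompatibility preservation in (2), and transporting a $\kappa$-sized antichain through $\bar{k}$ in (3) --- are all sound.
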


First, we show that $\dmu$ and $2^\mu$ can be separated by a finite support iteration of the Cohen forcing in the case $\kappa \neq \omega$.

\begin{theo}\label{theo:main3}
    Assume $\GCH$ and $\kappa\neq \omega$. Let $\pposet$ be a finite support iteration of the Cohen forcing $2^{< \omega}$ of length $\mu^{++}$. Then $\Vdash_{\pposet} \dmu < 2^{\mu}$.
\end{theo}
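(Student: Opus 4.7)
My plan is to establish both $\dmu \leq \mu^+$ and $2^\mu \geq \mu^{++}$ in $V^{\pposet}$, from which $\dmu < 2^\mu$ follows at once. The inequality $2^\mu \geq \mu^{++}$ is the easy half: since $\pposet$ is ccc and adjoins $\mu^{++}$ pairwise distinct Cohen reals, $2^{\aleph_0} \geq \mu^{++}$ in the extension, hence $2^\mu \geq \mu^{++}$ as well. A standard nice-name count together with $\GCH$ actually gives $2^\mu = \mu^{++}$, but only the lower bound matters for us. So the real substance lies in producing a dominating family of size $\mu^+$.

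My candidate dominating family is $(\kappa^\mu)^V$ itself, viewed inside $V^{\pposet}$. Because $\pposet$ has ccc, cardinals are preserved, and under $\GCH$ one has $|(\kappa^\mu)^V| = \mu^+$, so the candidate has the right size. To see that it dominates, let $\dot f$ be any $\pposet$-name for an element of $\kappa^\mu$. For each $\alpha<\mu$, ccc gives a countable maximal antichain that decides $\dot f(\alpha)$; let $\Gamma_\alpha \subseteq \kappa$ be the countable set of values forced by the conditions in that antichain. This is precisely where the hypothesis $\kappa \neq \omega$ is used: $\kappa = \cf{\mu}$ is then an uncountable regular cardinal, so $g(\alpha) := (\sup \Gamma_\alpha) + 1 < \kappa$. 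The function $g$ is defined in $V$ from the name, lies in $(\kappa^\mu)^V$, and by construction satisfies $g(\alpha) > f(\alpha)$ for \emph{every} $\alpha < \mu$ in $V^{\pposet}$; in particular $g$ dominates $f$ in the sense of \Cref{defi:generalized dominating}.

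Combining the two bounds will give $\dmu \leq \mu^+ < \mu^{++} \leq 2^\mu$ in $V^{\pposet}$, as required. The only subtle point, and the one I expect to be the main obstacle to verify cleanly, is the boundedness of each $\Gamma_\alpha$ strictly below $\kappa$, which rests entirely on $\kappa$ being uncountable and regular. This step completely breaks down when $\kappa = \omega$, since a countable subset of $\omega$ need not be bounded in $\omega$; and the impossibility of recovering the argument in that case is exactly what Main Theorem 4 (\Cref{theo:main4}) will later formalize.
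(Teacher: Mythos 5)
Your proposal is correct and follows essentially the same route as the paper: the paper also shows that $\left(\kappa^\mu\right)^V$ remains a dominating family in $V^{\pposet}$ by using the ccc to cover each value $\dot f(\alpha)$ by a countable set in the ground model and then taking suprema, which is exactly your $\Gamma_\alpha$ argument. Your write-up is in fact slightly more explicit than the paper's about where $\kappa\neq\omega$ enters (namely, that a countable subset of the uncountable regular cardinal $\kappa$ is bounded), but the content is identical.
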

\begin{proof}
    We show that $\left(\kappa^\mu\right)^V$ is a dominating family in $V^{\pposet}$. Let $\dot{f} \in V^\pposet$ be a $\pposet$-name such that $\Vdash_{\pposet} \dot{f} \in \kappa^\mu$. Since $\pposet$ is ccc, there exists a function $F \in ([\kappa]^{\aleph_0})^\mu \cap V$ such that $\Vdash_{\pposet} \dot{f}(\alpha) \in F(\alpha)$ for all $\alpha < \mu$. Therefore, $\Vdash_{\pposet} \dot{f} \leq \sup F$, and this implies $\Vdash_{\pposet} \dmu = \left| \left(\kappa^\mu\right)^V \right| = \mu^{+} < \mu^{++} = 2^{\aleph_0} \leq 2^{\mu}.$
\end{proof}

We do not know whether $\dmu < 2^\mu$ is consistent in the case $\kappa = \omega$. Below, we give some limitations on forcing $\dmu < 2^\mu$.

\begin{lem}\label{lem:existence of a good antichain}
    Let $\mathbb{P}$ be an atomless complete Boolean algebra, and $\qposet$ be a complete Boolean subalgebra of $\pposet$. Let $\dot{G}$ be the canonical name for generic filter of $\qposet$. If $\Vdash_\qposet \pposet / \dot{G} \text{ is non-trivial}$, then there is a countable antichain $A\subseteq \pposet$ such that $q \parallel a$ for all $a \in A$ and $q\in \qposet\setminus \lkakko \mathbf{0} \rkakko$.
\end{lem}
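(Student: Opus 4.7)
I would prove the lemma via the factorization $\pposet \cong \qposet \ast \dot\rposet$ furnished by \Cref{fac:several facts}(2), where $\dot\rposet$ is the $\qposet$-name for the quotient forcing. Under the dense embedding $\bar k$, an element $a \in \pposet$ has reduction $\widetilde a = \mathbf{1}_{\qposet}$ precisely when $a$ arises as $\bar k(\mathbf{1}_{\qposet}, \dot r)$ for some $\qposet$-name $\dot r$ with $\Vdash_\qposet \dot r \in \dot\rposet$. Also, ``$q \parallel a$ for every nonzero $q \in \qposet$'' is merely a restatement of ``$\widetilde a = \mathbf{1}$'' (since $\widetilde a$ is the smallest member of $\qposet$ above $a$, so the largest $q \in \qposet$ with $q \wedge a = 0$ is $-\widetilde a$), and compatibility of two such elements in $\pposet$ corresponds to $\Vdash_\qposet \dot r \wedge \dot r' \neq \mathbf{0}_{\dot\rposet}$. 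Hence it suffices to find $\qposet$-names $\langle \dot r_n : n < \omega \rangle$ with $\Vdash_\qposet$ ``$\{\dot r_n : n < \omega\}$ is a countably infinite antichain of nonzero elements of $\dot\rposet$'', since the desired antichain is then $A = \{\bar k(\mathbf{1}_{\qposet}, \dot r_n) : n < \omega\}$.

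In $V^\qposet$, $\dot\rposet$ is a complete Boolean algebra that is non-trivial by hypothesis. The main step is to show $\dot\rposet$ is atomless in $V^\qposet$: given this, a standard recursive halving of $\mathbf{1}_{\dot\rposet}$ produces a countably infinite antichain in $V^\qposet$, and mixing then yields $V$-side $\qposet$-names $\dot r_n$ realizing it. To prove atomlessness, I would suppose for contradiction that some $p \in \pposet$ with $q \leq \widetilde p$ witnesses an atom of $\dot\rposet$ below a nonzero $q \in \qposet$. Atomlessness of $\pposet$ lets us split $p \wedge q = p_0 \vee p_1$ into disjoint nonzero pieces in $\pposet$, and $\widetilde{p_0} \vee \widetilde{p_1} = \widetilde{p \wedge q} = q$ by additivity of reductions. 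Combining these reductions with the non-triviality of $\dot\rposet$ below $q$, one extracts a nonzero $q' \leq q$ forcing both $p_0$ and $p_1$ to be nonzero strict refinements of $p$ in $\dot\rposet$, contradicting atomicity of $p$.

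The principal obstacle is making this atomlessness argument precise. Atomlessness of $\pposet$ alone is not enough (consider e.g.\ a tensor product like $\pposet \cong \qposet \otimes B_4$, where the quotient has atoms), and a naive single use of the non-triviality hypothesis produces only a two-element antichain $\{b, -b\} \subseteq \pposet$ with $\widetilde b = \widetilde{-b} = \mathbf{1}$. To reach a countably infinite antichain, the non-triviality must be exploited essentially to propagate splittings below elements $p \in \pposet$ lying outside $\qposet$; this is the delicate point of the proof, and everything else (the $\bar k$-correspondence, recursive halving in a nontrivial atomless complete Boolean algebra, and mixing) is standard.
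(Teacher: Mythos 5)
Your overall architecture is exactly the paper's: pass to the factorization of $\pposet$ as $\qposet \ast \dot{\rposet}$ via the dense embedding $\bar k$ of \Cref{fac:several facts}, obtain $\qposet$-names $\langle \dot r_n \mid n<\omega\rangle$ for a countably infinite antichain of nonzero elements of $\dot\rposet$, set $A = \lkakko \bar k(\langle\mathbf{1},\dot r_n\rangle) \mid n<\omega\rkakko$, and verify full compatibility with $\qposet$ from $\bar k(\langle q,\check{\mathbf 1}\rangle)=q$. The paper dispatches in a single sentence the one step you flag as delicate: it simply asserts that the names $\dot r_n$ exist because $\Vdash_\qposet \pposet/\dot G$ is non-trivial.

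That step is where your proposal has a genuine gap, and your proposed route to close it --- deducing that $\dot\rposet$ is atomless in $V^\qposet$ from atomlessness of $\pposet$ plus bare non-triviality of the quotient --- provably cannot work; your own example shows why. Take $\pposet=\qposet\times\qposet$ with $\qposet$ atomless and embedded diagonally: $\pposet$ is atomless, the quotient is forced to be the four-element algebra, and a short computation shows that at most two pairwise incompatible elements of $\pposet$ are compatible with every nonzero element of $\qposet$ (an element $(a_0,a_1)$ has this property iff $a_0\vee a_1=\mathbf 1$, and three such pairwise incompatible pairs are impossible). Concretely, your ``additivity of reductions'' step breaks down here: from $\widetilde{p_0}\vee\widetilde{p_1}=q$ you cannot extract a single nonzero $q'\le q$ compatible with both pieces, since the two projections may partition $q$. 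So if ``non-trivial'' only means the quotient is not $\lkakko\mathbf 0,\mathbf 1\rkakko$, the lemma itself is false as stated; the hypothesis has to be read as asserting that the quotient is an atomless (equivalently, genuinely non-trivial as a forcing) complete Boolean algebra, which is also what the application in \Cref{theo:main4} requires, since the antichains $B_\alpha$ there are enumerated injectively by $\omega$. Under that reading your recursive-halving-plus-mixing paragraph already produces the $\dot r_n$ and finishes the proof; the atomlessness-transfer argument should be deleted rather than repaired.
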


\begin{proof}
    Let $\lkakko \dot{r}_n \mid n < \omega \rkakko \subseteq V^{\qposet}$ be a set of $\qposet$-names such that, for all $m \neq n < \omega$, $\Vdash_\qposet \dot{r}_n \bot \dot{r_m}$. Such $\dot{r}_n$'s exist since $\Vdash_\qposet \pposet / \dot{G} \text{ is non-trivial}$. Let $\bar{k}: \qposet \ast \dot{\rposet} \rightarrow \pposet$ be a dense embedding mentioned in \Cref{fac:several facts}. Define $A=\lkakko \bar{k}( \langle \dot{r_n}, \mathbf{1} \rangle ) \mid n < \omega \rkakko$. We show that $A$ is as desired. Take an arbitrary $q \in \qposet\setminus \lkakko \mathbf{0} \rkakko$. Clearly, for all $n < \omega$, $\langle \check{\mathbf{1}}, q \rangle \parallel \langle \dot{r_n}, \mathbf{1}\rangle$ holds. Since $\bar{k}$ is a dense embedding, $\bar{k}(\langle \check{\mathbf{1}}, q \rangle) \parallel \bar{k}( \langle \dot{r_n}, \mathbf{1}\rangle)$ also holds. By the definition of $\bar{k}$, $\bar{k}(\langle \check{\mathbf{1}}, q \rangle) = \mathbf{1} \wedge q = q$. Therefore, $q \parallel \bar{k}( \langle \dot{r_n}, \mathbf{1}\rangle)$ and this completes the proof.
\end{proof}

\begin{theo}\label{theo:main4}
    Assume $\GCH$ and $\kappa = \omega$. Let $\lambda > \mu$ be a regular cardinal, and $\pposet$ be an atomless complete Boolean algebra with $|\pposet| = \lambda$. If $\pposet$ has $\mu$-cc and $\Vdash_\qposet \pposet / \dot{G} \text{ is non-trivial}$  for every complete subalgebra $\qposet\subseteq \pposet$ with $|\qposet| < |\pposet|$, then $\Vdash_{\pposet} \dmu =  \lambda = 2^\mu$.
\end{theo}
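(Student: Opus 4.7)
The plan is to prove $2^\mu = \lambda$ and $\dmu \geq \lambda$ in $V^\pposet$ separately (the inequality $\dmu \leq 2^\mu$ is automatic). For $2^\mu = \lambda$, the upper bound is standard nice-name counting under $\mu$-cc and \GCH (at most $\lambda^\mu = \lambda$ names for subsets of $\mu$); for the lower bound, if $2^\mu < \lambda$ held then every subset of $\mu$ in $V^\pposet$ would already be named in some complete subalgebra $\qposet \subsetneq \pposet$ with $|\qposet| < \lambda$, contradicting the hypothesis that $\pposet/\dot{G}$ is non-trivial (it must add new subsets of $\mu$).

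For $\dmu \geq \lambda$ I argue by contradiction. Assume $\mathcal{D} \subseteq V^\pposet \cap \omega^\mu$ is dominating with $|\mathcal{D}| = \nu < \lambda$, with $\pposet$-names $\{\dot{d}_\beta : \beta < \nu\}$. By $\mu$-cc each name uses at most $\mu$ conditions; using \GCH to control complete-closure sizes, pick a complete subalgebra $\qposet \subseteq \pposet$ with $|\qposet| < \lambda$ having all $\dot{d}_\beta$ as $\qposet$-names. Then iteratively apply \Cref{lem:existence of a good antichain}: at stage $\alpha < \mu$, let $\qposet_\alpha$ be the complete subalgebra generated by $\qposet \cup \bigcup_{\beta < \alpha} A_\beta$ (still of size $<\lambda$), and use the lemma to obtain a countable antichain $A_\alpha = \{a^\alpha_n : n < \omega\}$ with every $a^\alpha_n$ compatible with every nonzero element of $\qposet_\alpha$. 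Induction on finite $F \subseteq \mu$ yields joint compatibility, $\bigwedge_{\alpha \in F} a^\alpha_{n_\alpha} \wedge q \neq \mathbf{0}$ for any $\langle n_\alpha \rangle$ and nonzero $q \in \qposet$, so the antichains generate a ccc Cohen-type complete subalgebra $\mathbb{C} \subseteq \pposet$ over $\qposet$.

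Define the $\pposet$-name $\dot{f} \in \omega^\mu$ by $\dot{f}(\alpha) = n \iff a^\alpha_n \in \dot{G}$ (and $\dot{f}(\alpha) = 0$ otherwise); since $\dot{f}$ is a $\mathbb{C}$-name, one may pass to the intermediate extension $V^{\qposet \ast \mathbb{C}}$, where $\dot{f}$ realizes a Cohen-generic function over $V^\qposet$. The standard Cohen unbounded-real argument then gives $\Vdash_\pposet |\{\alpha < \mu : \dot{f}(\alpha) \geq d(\alpha)\}| = \mu$ for every $d \in V^\qposet \cap \omega^\mu$: if some condition forced the size to be at most $\theta < \mu$, then by ccc of $\mathbb{C}$ the corresponding enumeration name would have countable value-sets per coordinate, fitting the set in a $V^\qposet$-set $T$ with $|T| \leq \theta < \mu$; picking $\alpha \in \mu \setminus T$ not pre-committed by the condition and invoking joint compatibility, one extends through $a^\alpha_{d(\alpha)}$ to force a new witness, a contradiction. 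In particular no $\dot{d}_\beta$ dominates $\dot{f}$, contradicting $\mathcal{D}$ being dominating. The main obstacles will be the cardinal-arithmetic bookkeeping to keep $|\qposet_\alpha| < \lambda$ throughout the $\mu$-length iteration (delicate under \GCH, especially when $\lambda$ is a successor of a singular of small cofinality), and verifying that the generated subalgebra $\mathbb{C}$ is ccc so that the Cohen-style analysis of $\dot{f}$ transfers to $V^\pposet$.
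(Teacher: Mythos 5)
Your overall architecture matches the paper's: the upper bound $2^\mu\leq\lambda$ by counting nice names under $\mu$-cc and $\GCH$, and, for the lower bound, a tower $\langle \qposet_\alpha \mid \alpha<\mu\rangle$ of complete subalgebras starting from one capturing the given names, a countable antichain $A_\alpha$ at each stage fully compatible with $\qposet_\alpha\setminus\lkakko\mathbf{0}\rkakko$ via \Cref{lem:existence of a good antichain}, and a diagonal name $\dot f$ read off from which member of $A_\alpha$ meets the generic. (Two side remarks: your separate argument that $2^\mu\geq\lambda$ is both unnecessary --- it follows from $\lambda\leq\dmu\leq 2^\mu$ --- and flawed as stated, since a non-trivial quotient need not add new subsets of $\mu$; and the bookkeeping keeping $|\qposet_\alpha|<\lambda$ is unproblematic under $\GCH$ with $\lambda$ regular.)

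The genuine gap is in the step where you conclude that $\dot f$ is not dominated. Your joint-compatibility computation only concerns conditions of the special form $q\wedge\bigwedge_{\alpha\in F}a^\alpha_{n_\alpha}$ with $q\in\qposet$, whereas the condition $p$ that (towards a contradiction) forces ``$\dot d_\beta$ dominates $\dot f$'' is an arbitrary element of $\pposet$, and such a $p$ need not be compatible with $a^\alpha_{n}$ for a prescribed value $n\geq \dot d_\beta(\alpha)$. Passing to ``$\dot f$ is Cohen-generic over $V^{\qposet}$'' does not repair this: it requires the copy of finite partial functions from $\mu$ to $\omega$ spanned by the $A_\alpha$'s to be a \emph{regular} subposet of $\pposet$ over $\qposet$ (every $p\in\pposet$ must have a reduction into the copy), and neither pairwise full compatibility with $\qposet_\alpha$ nor maximality of the $A_\alpha$ yields this; that regularity, not the ccc-ness of $\mathbb{C}$ which you flag, is exactly the point at issue. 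The paper supplies the missing mechanism with the projections $p_\alpha=\inf\lkakko q\in\qposet_\alpha\mid p\leq q\rkakko$ of \Cref{fac:several facts}: this decreasing sequence stabilizes at some $\beta$ by $\mu$-cc, and at a stabilization point one shows --- using that $p_\beta=p_{\beta+1}$ is a reduction of $p$ to $\qposet_{\beta+1}$ and that $\dot d_\beta(\beta)$ is decided by an antichain lying inside $\qposet_0$ --- that $p\wedge a\wedge a^\beta_{n+1}\neq\mathbf{0}$ for a suitable $a$ forcing $\dot d_\beta(\beta)=n$. That reduction-and-stabilization argument is the heart of the proof and is absent from your proposal; once it is in place, the detour through Cohen genericity becomes unnecessary.
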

\begin{proof}
    We show that $\Vdash_{\pposet} \dmu > \nu$ for all $\nu < \lambda$. Let $\dot{\mathcal{F}}$ be a name for a family of functions in $\omega^\mu$ such that $\Vdash_{\pposet}|\dot{\mathcal{F}}| = \nu$. Then we can take nice names $\dot{f_\xi}$ for all $\xi < \nu$ such that $\Vdash_{\pposet} \dot{\mathcal{F}} = \lkakko \dot{f_\xi} \mid \xi < \nu \rkakko$. It suffices to show that $\Vdash_{\pposet} \dot{\mathcal{F}} \text{ is not a dominating family}$. Let $X$ be the set of all conditions in $\pposet$ which appear in $\dot{f_\xi}$ for some $\xi < \nu$. Note that $|X|< \lambda$ since $\mu,\nu < \lambda$.

    By induction on $\alpha < \mu$, we construct sequences $\langle \qposet_\alpha \mid \alpha < \mu \rangle$ and $\langle B_\alpha \mid \alpha < \mu \rangle$ such that
    \begin{itemize}
        \item For every $\alpha< \mu$, $\qposet_\alpha$ is a complete subalgebra of $\pposet$, 
        \item For every $\alpha< \mu$, $B_\alpha$ is a countable maximal antichain in $\pposet$, 
        \item $\qposet_0$ is the smallest complete subalgebra of $\pposet$ that contains $X$,
        \item $q\parallel a$ for each $\alpha < \mu$, $a\in B_\alpha$ and $q\in \qposet_\alpha\setminus \lkakko \mathbf{0} \rkakko$,
        \item $\qposet_{\alpha+1}$ is the smallest complete subalgebra of $\pposet$ that contains $\qposet_\alpha \cup B_\alpha$, and
        \item $\qposet_\delta$ is the smallest complete subalgebra of $\pposet$ that contains $\bigcup_{\alpha < \delta} \qposet_\alpha$ for limit $\delta$.
    \end{itemize}
    This construction is possible by \Cref{lem:existence of a good antichain} and the assumptions that $\lambda$ is regular and $\GCH$ holds. Let $\lkakko b_n^\alpha \mid n< \omega \rkakko$ be an injective enumeration of $B_\alpha$ for each $\alpha < \mu$. Define a $\pposet$-name $\dot{g}$ for a function in $\omega^\lambda$ that satisfies $b_n^\alpha\Vdash_{\pposet} \dot{g}(\alpha) = n$. We show that $\Vdash_{\pposet} \dot{g} \nleq \dot{f_\xi}$ for all $\xi < \nu$.

    For contradiction, suppose that there is $p \in \pposet$ such that $p\Vdash_{\pposet} \dot{g} \leq \dot{f_\xi}$ for some $\xi < \nu$. For each $\alpha < \mu$, let $p_\alpha = \inf\lkakko q \in \qposet_\alpha \mid p \leq q \rkakko \in \qposet_\alpha$. Then $\lkakko p_\alpha \mid \alpha < \mu \rkakko$ is a decreasing sequence in $\pposet$. 
    \begin{cla}
        There is some $\beta < \mu$ such that $p_{\beta + 1} = p_\beta$.
    \end{cla}
    \begin{proof}[Proof of claim]
        Suppose, for contradiction, that $p_{\beta + 1} \lneq p_\beta$ for all $\beta
         < \mu$. Then $\{ \neg p_{\beta + 1} \wedge p_\beta \mid \beta < \mu\}$ is an antichain with size $\mu$. This contradicts the assumption that $\pposet$ is $\mu$-cc.
    \end{proof}

    By the claim above, we can take an ordinal $\beta < \mu$ such that $p_{\beta + 1} = p_\beta$. By the choice of $B_{\beta}$, $p_{\beta}\parallel b$ for all $b \in B_\beta$. Since $p_{\beta} = p_{\beta + 1} \in \qposet_{\beta + 1}$ is a reduction of $p$ in $\qposet_{\beta + 1}$, $p \parallel b$ for all $B_\beta$. Let $A = \lkakko a_n \mid n < \omega \rkakko \subseteq X$ be a maximal antichain such that $a_n \Vdash_{\pposet} \dot{f_\xi} (\beta) = n$. 
    \begin{cla}
        $p \parallel a$ implies $p \wedge a \parallel b$ for all $a \in A$ and $b \in B_\beta$.
    \end{cla}
    \begin{proof}[Proof of claim]
        For contradiction, suppose that $p \parallel a$ and $p \wedge a \bot b$ for some $b \in B_\beta$.
        Then $p \bot a \wedge b$, and this implies $p_{\beta + 1} \bot a \wedge b$ since $a \wedge b \in \qposet_{\beta + 1}$ and $p_{\beta + 1}$ is a reduction of $p$ in $\qposet_{\beta + 1}$. Since $p_{\beta} = p_{\beta + 1}$, $p_{\beta}\bot a \wedge b$ and this follows $p_{\beta} \wedge a\bot b$. Since $p \parallel a$, $p_\beta \wedge a \neq 0$. However, $p_\beta \wedge a \in \qposet_{\beta}$ and this contradicts the fact that $q \parallel b$ for all $q \in \qposet_\beta\setminus \lkakko \mathbf{0} \rkakko$.
    \end{proof}
    Since $A$ is a maximal antichain, there is $n < \omega$ such that $p \parallel a_n$. Thus, by the claim above, $r= p \wedge a_n \wedge b_{n+1}^\beta \neq 0$, and so $r\Vdash_{\pposet} \dot{g} \nleq \dot{f_\xi}$. Since $r \leq p$, this is a contradiction.

    We show that $\Vdash_{\pposet} \lambda \geq 2^\mu$. Note that $V\models\GCH$ and $\pposet$ is a $\mu$-cc forcing. By counting the nice names for $2^\mu$, we can show that $\Vdash_{\pposet} \lambda \geq 2^\mu$. Since $\dmu \leq 2^\mu$ holds, $\Vdash_{\pposet} \lambda \leq \dmu \leq 2^\mu \leq \lambda$. This completes the proof.
\end{proof}

\section{Open questions}

One of the most interesting and important problems is the separability of $\dmu$ and $2^\mu$ when $\kappa=\omega$.
\begin{que}
    Is $\dmu < 2^\mu$ consistent when $\kappa = \omega$?
\end{que}

A related question is the following;

\begin{que}
    Does the consistency of $\dmu < 2^\mu$ with $\kappa = \omega$ imply the existence of some large cardinals? 
\end{que}

There remains room for a deep analysis of the lower bounds. For example,

\begin{que}
    Is $\mu^\kappa \leq \dmu$ provable in $\ZFC$ (with some natural additional assumptions)?
\end{que}

\section*{Acknowledgements}
    I would like to express my deepest gratitude to my supervisor Hiroshi Sakai for his patient supervision and inspiring discussions on my work. Without his guidance and help, this paper would have been difficult to complete.

        \bibliographystyle{IEEEtran}
        \bibliography{main}
\end{document}